\documentclass[12pt,twoside]{article}
\usepackage[ansinew]{inputenc}
\usepackage{amsfonts}
\usepackage{latexsym}
\usepackage{amsmath}
\usepackage{amssymb}
\usepackage{eepic}
\usepackage{xcolor}
\usepackage{tikz}
\usepackage{float}
\usepackage{pstricks}
\usepackage{graphicx}
\usepackage{subfigure}

\textheight211mm
\addtolength{\hoffset}{0.5cm}
\setlength\arraycolsep{2pt}

\setlength\arraycolsep{2pt}

\newcommand{\R}{\mathbb{R}}

\newcommand{\C}{\mathbb{C}}

\newtheorem{defin}{Definition}

\newtheorem{theorem}{Theorem}

\newtheorem{exa}{Example}
\newenvironment{example}{\begin{exa}\rm}{\end{exa}}
\newtheorem{lemma}[defin]{Lemma}

\newenvironment{proof}
{\noindent{\it Proof.}}{\hfill $\Box$\par\vspace{2.5mm}}
\newenvironment{remark}
{\par\vspace{2.5mm}\noindent{\bf Remark.}}{\par\vspace{2.5mm}}

\newtheorem{que}{Question}

\newtheorem{pro}{Problem}

\numberwithin{equation}{section}

\makeatletter
\renewcommand{\ps@myheadings}{%
\renewcommand{\@evenhead}%
{{\rm\thepage}\hfil{\sc Gundersen, Heittokangas, Wen}\hfil}%
\renewcommand{\@oddhead}%
{\hfil{{\sc Contour integral solutions}\hfil{\rm\thepage}}}%
\renewcommand{\@evenfoot}{}%
\renewcommand{\@oddfoot}{}%
}\makeatother \pagestyle{myheadings}

\setlength{\evensidemargin}{19pt} \setlength{\oddsidemargin}{19pt}

\title{\bf\Large Contour integral solutions of linear differential equations which include a generalization of the Airy integral}
\author{Gary~G.~Gundersen, Janne~M.~Heittokangas, Zhi-Tao~Wen*}
\date{}
\begin{document}
\maketitle

\begin{abstract}
The Airy integral is a well-known contour integral solution of Airy's equation which has several applications and which has been used for mathematical illustrations due to its interesting properties. We present and derive properties of two families of contour integral solutions of linear differential equations, where one family includes the Airy integral and Airy's equation, such that the family generalizes known properties of the Airy integral which include exponential decay growth in a certain sector. The second family includes a known example and contains a subfamily with interesting properties where a separate analysis of three pairwise linearly independent contour integral solutions of a particular equation is given. 

\bigskip

\noindent
\textbf{Keywords:}
Airy's equation, Airy integral, contour integral solutions, linear differential equations, polynomial coefficients.

\medskip
\noindent
\textbf{2020 MSC:} 33C10, 34M05, 34M10.

\end{abstract}

\renewcommand{\thefootnote}{}
\footnotetext[1]{*Corresponding Author: Zhi-Tao Wen (email: zhtwen@stu.edu.cn)}
\footnotetext[2]{The second author was supported by the Academy of Finland \#268009
and the Vilho, Yrj\"o and Kalle V\"ais\"al\"a Foundation of the Finnish Academy of Science and Letters. The third author was supported by the National Natural Science Foundation of China (No.~11971288 and No.~11771090) and Shantou University SRFT (NTF18029).}

\section{Introduction}  

Solutions of linear differential equations which are defined by contour integrals often have interesting properties. A well-known example is the Airy integral $\text{Ai}(z)$, which is a solution of Airy's equation $f^{\prime\prime} - zf = 0$, see Example~\ref{Airy ex} below. The Airy integral $\text{Ai}(z)$ has several applications (e.g., theory of diffraction, dispersion of water waves, turning point problem; see \cite{JJ}, \cite{SH}), and has also been used for mathematical purposes, such as to (a) illustrate results, (b) give examples on the ``boundary line'' of theorems, and (c) provide counterexamples; see \cite{Clarkson}, \cite{GLS}, \cite{HL}, \cite{HH}, \cite{Olver}, \cite{Steinmetz1}, \cite{Steinmetz2}. These references include the use of $\text{Ai}(z)$ for the purposes (a), (b), (c) in studies on the second Painlev\'{e} equation.

The Airy integral $\text{Ai}(z)$ is one of several interesting contour integral solutions of linear differential equations of the form 
\begin{equation}\label{lde}
f^{(n)}+P_{n-1}(z)f^{(n-1)}+\cdots +P_1(z)f'+P_0(z)f=0
\end{equation}
with polynomial coefficients $P_0(z),\ldots,P_{n-1}(z)$. In this paper we present and derive properties of two families of contour integral solutions of equations of the two respective forms
$$f^{(n)} +(-1)^{n+1} b f^{(k)} + (-1)^{n+1}zf = 0,   \quad\quad 0 < k < n, \quad b \in \C,$$ 
$$f^{(n)} - zf^{(k)} - f = 0,   \quad\quad  1 < k < n.$$ 

The first family includes the Airy integral $\text{Ai}(z)$ and  Airy's equation, and possesses properties that generalize known properties of $\text{Ai}(z)$, which include exponential decay growth in a certain sector, see Theorems~\ref{phi exist}-\ref{phi real} below. Since the Airy integral is a {\it special function} in mathematical physics that has several applications, this paper may be of interest in the areas of applied mathematics and special functions.

The second family also includes a known example. The properties of the second family include interesting features of a subclass of the family and a separate analysis of three pairwise linearly independent contour integral solutions of the equation $f^{(4)} - zf^{'''} - f = 0$ which are related by a specific identity, see Theorems~\ref{psi exist}-\ref{H and U} below.  

There are worthwhile questions to consider regarding contour integral solutions of equations of the form \eqref{lde} (including for the two families in this paper), which could lead to the creation of new special functions and new results in applied mathematics. We hope this paper and future studies on contour integral solutions will be used toward these goals and for the above purposes (a), (b), (c). 

The two families are discussed in Sections~\ref{phi family}-\ref{phi properties} and Sections~\ref{psi family}-\ref{specific example}, respectively. The next section contains some known examples.

\section{Known solutions from contour integrals} \label{known}

In general, every solution of \eqref{lde} is an entire function and each transcendental solution $f$ of \eqref{lde} has a finite rational order $d > 0$ and finite type $c > 0$, such that 
\begin{equation}\label{Valiron}
\log M(r, f) = (c + o(1))r^d
\end{equation}
as $r \to \infty$, where $M(r, f)$ is the maximum modulus function; see \cite[p. 108]{Valiron}. For possible values of $d$, see {\cite [Theorem 1] {GSW}}. Throughout the paper, $\rho(f)$ denotes the order of an entire function $f$.

\begin{example} \label{Airy ex} \cite{G}, \cite{IM}, \cite{JJ}, \cite{Olver}
The {\it Airy integral} $f(z) = \operatorname{Ai}(z)$ is a solution of the {\it Airy differential equation}
\begin{equation}\label{Airy DE}
 f'' - zf = 0
\end{equation}
that is defined by the contour integral
\begin{equation}\label{Airy}
\operatorname{Ai}(z) = \frac{1}{2\pi i}\int_C \exp\left\{\frac{1}{3}w^3 - wz \right\} dw,
\end{equation}
where the contour $C$ runs from $\infty$ to 0 along $\arg w = -\pi/3$ and then from 0 to $\infty$ along $\arg w = \pi/3$. The Airy integral $\text{Ai}(z)$ has the following properties: (i) $\rho(\text{Ai}) = 3/2$, (ii) $|\text{Ai}(z)|$ has exponential decay growth of order $\exp\{{-|z|^{-3/2}}\}$ in the sector $-\pi/3 < \arg z < \pi/3$, and (iii) $\text{Ai}(z)$ has an infinite number of negative real zeros and no other zeros, plus $\text{Ai}(z)$ has less than the usual overall frequency of zeros. We generalize property (i) in Theorem~\ref{phi order} and generalize property (ii) in Theorem~\ref{exp decay}.

If $\beta_1, \beta_2, \beta_3$ are the three distinct cube roots of unity, and if we set
\begin{equation} \label{Ai comp}
f_j(z) = \text{Ai}(\beta_j z), \quad j = 1, 2, 3,
\end{equation}
then the functions $f_1, f_2, f_3$ are three pairwise linearly independent solutions of \eqref{Airy DE}. Thus, every solution of \eqref{Airy DE} can be expressed as a contour integral function. We generalize property \eqref{Ai comp} in Theorem~\ref{fj independent}. 

In Sections~\ref{phi family}-\ref{phi properties} we present and discuss a family of equations and solutions which includes Airy's equation and the Airy integral.

\end{example}

\begin{example} \label{GS ex} $\cite{GS}$ A generalization of the Airy integral is as follows. For each positive integer $q$, there exist contour integral functions of the form
\begin{equation}\label{Airy gen}
g(z) = \frac{1}{2\pi i}\int_{D} e^{P_q(z, w)} dw,
\end{equation}
where $P_q(z, w)$ is a polynomial in $z$ and $w$, and where the contour $D$ can be chosen to consist of two rays that are connected in a similar way as in Example 1, such that $f = g(z)$ is a solution of the equation
\begin{equation}\label{gen Airy eq}
f^{\prime\prime} - z^qf = 0.
\end{equation}
When $q = 1$, \eqref{gen Airy eq} is Airy's equation \eqref{Airy DE} and the contour $D$ can be chosen so that $g(z)$ in \eqref{Airy gen} reduces to the Airy integral $\text{Ai}(z)$ in \eqref{Airy}. Generalizing Example~\ref{Airy ex}, for each $q$, the functions $g(z)$ in \eqref{Airy gen} have the properties that $|g(z)|$ has exponential decay growth in a certain sector and $g(z)$ has less than the usual frequency of zeros.  

\end{example}

\begin{remark} Example~\ref{GS ex} gives a generalization of the Airy integral in one way and this paper gives a generalization of the Airy integral in another way.\end{remark}

\begin{example} \label{GSW ex 1} \cite[Example 1] {GSW} For any $R > 0$, the contour integral function $G(z)$ defined by
\begin{equation}\label{G def}
G(z) = \int_{|w| = R} \exp\left\{\frac{z}{w} - \frac{1}{2w^2} - w \right\} dw
\end{equation}
is a solution of the equation
\begin{equation*}
f^{\prime\prime\prime} - zf^{\prime\prime} - f = 0,
\end{equation*}
where $\rho(G) = 1/2$. Since it is known {\cite [Corollary 2] {GSW}} that every transcendental solution $f$ of \eqref{lde} satisfies $\rho(f) \geq 1/(n - 1)$, the contour integral solution $G(z)$ is on the ''boundary line'' of this result, since $\rho(G) = 1/2 = 1/(n - 1)$ when $n = 3$. In Section~\ref{psi family} we present and discuss a family of equations and solutions that includes this example as a special case. 

\end{example}

\begin{example} \label{GSW ex 7} \cite[Example 7] {GSW}  Let the function $H(z)$ be defined by the contour integral
\begin{equation} \label{H def}
H(z) = \int_K \exp\left\{\frac{z}{\sqrt{2w}} - \frac{1}{4w} - w \right\} dw,
\end{equation}
where the contour $K$ is defined by $K = K_1 + K_2 + K_3$ with
\begin{eqnarray*}
K_1&:&  w = re^{i\pi/4}, ~ r ~\text{goes~from}~ +\infty ~ \text{to} ~ 1,\\
K_2&:&  w = e^{i\theta}, ~\theta~ \text{goes~from}~ \pi/4 ~ \text{to}~ 7\pi/4,\\
K_3&:&  w = re^{i7\pi/4}, ~ r~ \text{goes~from}~ 1 ~\text{to}~ +\infty,
\end{eqnarray*}
and where $\sqrt{2w}$ is defined by the branch
$$\sqrt{\zeta} = \exp\left\{\frac{1}{2}\log|\zeta| + i \frac{1}{2}\arg{\zeta}\right\}, \quad \quad 0 < \arg{\zeta} < 2\pi.$$
Then $f = H(z)$ is a solution of the equation
\begin{equation*} \label{H de}
f^{(4)} - zf^{\prime\prime\prime} - f = 0,
\end{equation*}
where $\rho(H) = 2/3$. In Section~\ref{specific example} we discuss another contour integral function $U(z)$ that solves this equation, and show that $U(z)$, $H(z)$, $H(-z)$ are three pairwise linearly independent solutions that satisfy a particular identity, see Theorem~\ref{H and U}. In this process, a more convenient form for $H(z)$ is derived. 

\end{example}

\section{A family that includes the Airy integral} \label{phi family}

Consider a linear differential equation of the form
	\begin{equation} \label{phi eq}
	f^{(n)} +(-1)^{n+1} bf^{(k)} +(-1)^{n+1} zf = 0, \quad\quad 0 < k < n, \quad b \in \C,
	\end{equation}
and let $\varphi = \varphi(n, k, b)$ denote the contour integral function
	\begin{equation} \label{phi def}
	\varphi(z) = \frac{1}{2 \pi i}\int_{C} \exp\left\{-wz + b\alpha w^{k+1} + \beta w^{n+1}\right\} \; dw,
	\end{equation}
where the contour $C$ runs from $\infty$ to 0 along $\arg w=-\pi/(n+1)$ and then from $0$ to $\infty$ along $\arg w=\pi/(n+1)$, and where
$$\alpha=\frac{(-1)^{k+1}}{k+1} \quad\quad \hbox{and} \quad\quad \beta=\frac{1}{n+1}.$$ 

In the particular case when $n = 2$ and $b = 0$, $\varphi(z)$ in \eqref{phi def} is the Airy integral $\text{Ai}(z)$ in \eqref{Airy} and \eqref{phi eq} is Airy's equation \eqref{Airy DE}. In the general case, we will show that $\varphi = \varphi(n, k, b)$ in \eqref{phi def} is a transcendental solution of \eqref{phi eq} with concrete properties that generalize known properties of the Airy integral. These properties include (i) exponential decay growth of $|\varphi(z)|$ in a certain sector (Theorem~\ref{exp decay}), and, (ii) when $b = 0$, all solutions of \eqref{phi eq} can be expressed in terms of contour integral functions that have a similar form to \eqref{phi def} (Theorem~\ref{fj independent}). 

The next section contains the aforementioned properties of $\varphi$. Most of this section is devoted to showing that $\varphi(z) \not\equiv 0$. 

\begin{theorem} \label{phi exist}
The function $f = \varphi(z)$ in \eqref{phi def} is a nontrivial solution of \eqref{phi eq}.
\end{theorem}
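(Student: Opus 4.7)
The statement splits cleanly into two claims: (i) $f = \varphi(z)$ satisfies \eqref{phi eq}, and (ii) $\varphi \not\equiv 0$. As the preceding paragraph signals, (i) is routine while (ii) is the substantive point.

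For (i), I would first verify that $\varphi$ is entire. Along the rays $\arg w = \pm \pi/(n+1)$ one has $w^{n+1} = -|w|^{n+1}$, so $\operatorname{Re}\bigl(\beta w^{n+1}\bigr) = -|w|^{n+1}/(n+1)$; the intermediate term $b\alpha w^{k+1}$ has degree $k+1 \le n$ and is thus dominated at infinity. Hence the integral in \eqref{phi def} converges absolutely and uniformly for $z$ in compact subsets of $\C$, giving an entire $\varphi$ with
\begin{equation*}
\varphi^{(j)}(z) = \frac{1}{2\pi i}\int_C (-w)^j \exp\{-wz + b\alpha w^{k+1} + \beta w^{n+1}\}\, dw.
\end{equation*}
Using the defining identities $\alpha(k+1) = (-1)^{k+1}$ and $\beta(n+1) = 1$, the integrand $g(w,z) = \exp\{-wz + b\alpha w^{k+1} + \beta w^{n+1}\}$ satisfies
\begin{equation*}
\frac{\partial g}{\partial w} = \bigl(w^n + (-1)^{k+1} b w^k - z\bigr) g.
\end{equation*}
A short comparison of signs shows
\begin{equation*}
\varphi^{(n)}(z) + (-1)^{n+1} b \varphi^{(k)}(z) + (-1)^{n+1} z\varphi(z) = \frac{(-1)^n}{2\pi i}\int_C \frac{\partial g}{\partial w}\, dw,
\end{equation*}
and the right side is $0$ because $g(w,z) \to 0$ as $w \to \infty$ along either ray of $C$.

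For (ii), I would look first at $z = 0$ in the simplest case $b = 0$: parametrizing $w = re^{\pm i\pi/(n+1)}$ on the two rays of $C$ gives
\begin{equation*}
\varphi(0) = \frac{\sin(\pi/(n+1))}{\pi}\int_0^\infty e^{-r^{n+1}/(n+1)}\,dr > 0.
\end{equation*}
For general $b\in\C$ this evaluation is spoiled by the oscillatory factor $\exp\{b\alpha w^{k+1}\}$ on the rays. Two natural strategies present themselves. The first is to expand this factor in powers of $b$, integrate term-by-term, and obtain each Taylor coefficient $\varphi^{(j)}(0)$ as a convergent series of Gamma-like integrals; one then argues that the full family of coefficients cannot all vanish simultaneously. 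The second, which seems more in tune with the sectorial decay result announced as Theorem~\ref{exp decay}, is to apply the saddle-point method to \eqref{phi def} as $z\to +\infty$ along the positive real axis. The saddle of $\Phi(w,z) = -wz + b\alpha w^{k+1} + \beta w^{n+1}$ is $w_0 \approx z^{1/n}$ (the term $b\alpha w^{k+1}$ being of lower order), giving a leading-order behavior of the form
\begin{equation*}
\varphi(z) \sim C\, z^{(1-n)/(2n)} \exp\!\left\{-\tfrac{n}{n+1}\, z^{(n+1)/n}\right\}, \quad z \to +\infty,
\end{equation*}
from which $\varphi \not\equiv 0$ follows at once.

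The main obstacle is handling $b \neq 0$ in claim (ii); the straightforward evaluation at $z = 0$ no longer applies, and both routes above require care (the series route to rule out cancellation among the coefficients $\varphi^{(j)}(0)$, the asymptotic route to justify the deformation of $C$ onto the steepest-descent path and to confirm that the subdominant contribution from $b\alpha w^{k+1}$ does not overwhelm the leading $\beta w^{n+1}$ saddle). I would pursue the saddle-point route, since it simultaneously proves nonvanishing and sets up the exponential decay of Theorem~\ref{exp decay}.
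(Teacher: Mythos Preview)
Your treatment of part (i) and of the $b=0$ case of (ii) matches the paper exactly. For $b\neq 0$, however, the paper takes a quite different route from your saddle-point proposal. It argues by contradiction: assuming $\varphi\equiv 0$, it extracts from $\operatorname{Re}\bigl(\varphi^{(p)}(0)\bigr)=0$ at the special indices $p=n+j(n+1)$ a sequence of real integral identities, then sums these against the Taylor coefficients of $\exp\{2\beta r^{n+1}\}$ so as to convert the decaying weight $e^{-\beta r^{n+1}}$ in the integrand into the growing weight $e^{+\beta r^{n+1}}$. An elementary lemma (Lemma~\ref{inc}) on integrals of the form $\int_0^\infty x^a E(x)\sin x\,dx$ with $E'$ eventually bounded below then forces the resulting integral to diverge to $\pm\infty$, contradicting the assumed vanishing; a three-case split according to the sign of $\sin(\gamma-\lambda)\sin(\gamma+\lambda)$ (where $b=|b|e^{i\lambda}$, $\gamma=\frac{k+1}{n+1}\pi$) is needed to organise this. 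Your steepest-descent approach is the more classical one and, if carried through to a genuine two-sided asymptotic, would indeed yield $\varphi\not\equiv 0$ while simultaneously sharpening Theorem~\ref{exp decay}; but note that Theorem~\ref{exp decay} as proved in the paper gives only the upper bound $|\varphi(z)|\le M\exp\{-K|z|^{1+1/n}\}$, so you cannot simply quote it---you would need to supply the matching lower bound yourself, which amounts to the full saddle-point computation with a nonzero leading constant. The paper's argument, by contrast, is entirely elementary (no contour deformation to the steepest-descent path, no asymptotic error control), at the price of the somewhat intricate case analysis.
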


We use the following lemma in our proof of Theorem~\ref{phi exist}.

\begin{lemma} \label{inc}
Let $a\geq 0$ and $\eta>0$ be real constants. Suppose that $E(x)$ is a continuous real-valued function on $[0, \infty)$ for which there exists an $x_0>0$ such that $E'(x)>\eta$ for $x\geq x_0$. Then
\begin{equation} \label{-infty}
\int_0^{\infty} x^a E(x)\sin x \; dx = -\infty.
\end{equation}
\end{lemma}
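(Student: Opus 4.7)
The plan is a half-period pairing argument. The factor $\sin x$ is nonnegative on each interval $[2\pi k,2\pi k+\pi]$ and nonpositive on $[2\pi k+\pi,2\pi(k+1)]$, so pairing the two halves of one period reduces the question to whether the weight $x^aE(x)$ is larger on the negative half than on the positive half. The hypothesis $E'>\eta$ quantifies precisely this, and should produce a strictly negative contribution from each full period whose magnitude forces the integral to diverge to $-\infty$.

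First I would observe that $E(x)\ge E(x_0)+\eta(x-x_0)\to\infty$, so there exists $x_1\ge x_0$ with $E(y)\ge 0$ on $[x_1,\infty)$. Fixing $k_0\in\mathbb{N}$ with $2\pi k_0\ge x_1$, I set
\[
I_k=\int_{2\pi k}^{2\pi(k+1)}x^aE(x)\sin x\,dx \qquad (k\ge k_0).
\]
Substituting $y=x-\pi$ on the second half of each period and using $\sin(y+\pi)=-\sin y$ gives
\[
I_k=\int_{2\pi k}^{2\pi k+\pi}\sin y\,\bigl[y^aE(y)-(y+\pi)^aE(y+\pi)\bigr]\,dy,
\]
with $\sin y\ge 0$ throughout the range of integration.

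The core step is to bound the bracket. By the mean value theorem and $E'>\eta$, one has $E(y+\pi)-E(y)>\pi\eta$ for $y\ge x_0$; combined with $(y+\pi)^a\ge y^a$ (since $a\ge0$) and $E(y)\ge 0$, this yields
\[
(y+\pi)^aE(y+\pi)\ge (y+\pi)^aE(y)+\pi\eta(y+\pi)^a\ge y^aE(y)+\pi\eta\,y^a,
\]
so the bracket is $<-\pi\eta\,y^a$. Therefore
\[
I_k<-\pi\eta\int_{2\pi k}^{2\pi k+\pi}y^a\sin y\,dy\le -2\pi\eta\,(2\pi k)^a,
\]
and since $a\ge 0$ the series $\sum_{k\ge k_0}(2\pi k)^a$ diverges. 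Hence $\sum_{k=k_0}^{M-1}I_k\to-\infty$ as $M\to\infty$, and adding the finite initial piece $\int_0^{2\pi k_0} x^aE(x)\sin x\,dx$ establishes \eqref{-infty}.

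The main obstacle I anticipate is the clean uniform control of the bracket $y^aE(y)-(y+\pi)^aE(y+\pi)$: combining $E(y+\pi)-E(y)>\pi\eta$ with $(y+\pi)^a\ge y^a$ without introducing a wrong-sign cross term relies crucially on $E(y)\ge 0$, not merely $E$ being bounded below. This is why the preliminary reduction to $y\ge x_1$ (where $E$ is nonnegative) must be made before the pairing step. Once the bracket estimate is secured, the rest reduces to summing a divergent series of positive powers of $k$.
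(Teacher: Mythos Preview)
Your proof is correct and follows essentially the same half-period pairing argument as the paper: split into full periods, shift the second half by $\pi$, and use $E'>\eta$ via the mean value theorem to bound the resulting bracket by $-\pi\eta\,y^a$. You are in fact slightly more careful than the paper in first passing to a tail where $E\ge 0$ before invoking $(y+\pi)^a\ge y^a$; the paper's version of this inequality tacitly relies on the same positivity.
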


\begin{proof} Set $v(x) = x^aE(x)$. For each integer $j \geq 0$, we have
\begin{equation*}
	\begin{split}
	\int_{2j\pi}^{(2j+2)\pi}v(x)\sin x\,dx&=\int_{2j\pi}^{(2j+1)\pi} v(x)\sin x\,dx+\int_{(2j+1)\pi}^{(2j+2)\pi} v(x)\sin x\,dx\\
	&=\int_{2j\pi}^{(2j+1)\pi} (v(x)-v(x+\pi))\sin x\,dx.
	\end{split}
\end{equation*}
Choose a positive integer $j_0$ such that $ 2\pi j_0 \geq x_0$, and set $D = -\eta\pi(2\pi j_0)^a$. Then for each $j \geq j_0$, when $x$ satisfies $2j\pi <x<(2j+1)\pi$, it follows from the mean value theorem that there exists a point $\xi_j \in (x, x+\pi)$ such that
    $$
    v(x)-v(x+\pi) \leq x^aE(x)-x^aE(x+\pi) = -\pi E'(\xi_j)x^a< D< 0.
    $$
Therefore,
\begin{equation*}
  \begin{split}
  \int_{2\pi j_0}^{\infty} x^a E(x) \sin x \; dx &= \sum_{j = j_0}^{\infty} \int_{2j\pi}^{(2j+1)\pi} (v(x) - v(x + \pi)) \sin x \; dx\\
  &< \sum_{j = j_0}^{\infty} D \int_{2j\pi}^{(2j+1)\pi} \sin x \; dx = \sum_{j=j_0}^{\infty} 2D = -\infty.
  \end{split}
\end{equation*}
It follows that \eqref{-infty} holds.\end{proof}

\bigskip

\noindent \emph{Proof of Theorem~\ref{phi exist}.}  Since $0 < k < n$, we obtain from \eqref{phi def},
\begin{equation*}
\begin{split}
&2 \pi i \{\varphi^{(n)}(z) +(-1)^{n+1} b\varphi^{(k)}(z) + (-1)^{n+1} z\varphi(z) \} \\
=&\int_{C} \left\{(-w)^n+(-1)^{n+1}b(-w)^k+(-1)^{n+1}z\right\}\exp\left(-wz+b \alpha w^{k+1}+\beta w^{n+1}\right) \, dw\\
=& \; (-1)^n \int_C \frac{\partial}{\partial w}\exp\left(-wz+b \alpha w^{k+1}+\beta w^{n+1}\right) \, dw\\
=& \; (-1)^n\left[\exp\left(-wz+b \alpha w^{k+1}+\beta w^{n+1}\right)\right]_{C}=0.
\end{split}
\end{equation*}
It follows that $f = \varphi(z)$ is a solution of \eqref{phi eq}. To complete the proof of Theorem~\ref{phi exist}, it suffices to show that $\varphi(z) \not\equiv 0$.  

Consider the value of $\varphi^{(p)}(0)$, where $p \geq 0$ is an integer. Set 
$$\gamma = \frac{k+1}{n+1} \pi  \quad\quad \hbox{and} \quad\quad b = |b|e^{i\lambda},$$
where $\lambda \in \R$. From \eqref{phi def},
	\begin{equation*}
	\begin{split}
	\varphi^{(p)}(0)=&\frac{(-1)^p}{2 \pi i}\int_{C} w^p \exp\left\{b\alpha w^{k+1}+\beta w^{n+1}\right\} \; dw\\
	=&\frac{(-1)^p}{2 \pi i}\int_{\infty}^0 r^p \exp\left\{|b|\alpha r^{k+1}e^{(\lambda-\gamma) i}-\beta r^{n+1} -\frac{p+1}{n+1} \pi i \right\} \; dr\\
	    &+\frac{(-1)^p}{2 \pi i}\int^{\infty}_0 r^p \exp\left\{|b|\alpha r^{k+1}e^{(\lambda+\gamma) i}-\beta r^{n+1} +\frac{p+1}{n+1} \pi i\right\} \; dr.
	\end{split}
	\end{equation*}
By taking real parts, we obtain
	\begin{equation} \label{p real part}
    \begin{split}
	\text{Re}\left(\varphi^{(p)}(0)\right) =& \frac{(-1)^p}{2\pi}\int_0^\infty r^p e_1(r) \sin\left\{|b|\alpha r^{k+1}\sin(\gamma-\lambda) +\frac{p+1}{n+1} \pi \right\}\,dr\\
         &+\frac{(-1)^p}{2\pi}\int_0^\infty r^p e_2(r) \sin\left\{|b|\alpha r^{k+1}\sin(\gamma+\lambda) +\frac{p+1}{n+1} \pi \right\}\,dr,
    \end{split}
	\end{equation}
where
    \begin{equation*}
    \begin{split}
    e_1(r) &= \exp\left\{|b|\alpha r^{k+1}\cos(\gamma-\lambda)-\beta r^{n+1}\right\},\\
    e_2(r) &= \exp\left\{|b|\alpha r^{k+1}\cos(\gamma+\lambda)-\beta r^{n+1}\right\}.
    \end{split}
    \end{equation*}
		
\bigskip

Suppose first that $b = 0$. Then from \eqref{p real part},
$$\text{Re}(\varphi(0)) = \frac{1}{\pi} \sin \left(\frac{\pi}{n+1}\right) \int_0^{\infty} \exp \left\{-\beta r^{n+1}\right\} \; dr > 0,$$
which proves that $\varphi(z) \not\equiv 0$.

Next suppose that $b \not= 0$. We now make the assumption that $\varphi(z) \equiv 0$. Then $\text{Re}\left(\varphi^{(p)}(0)\right) = 0$ for every $p \geq 0$. By choosing $p = n + j(n + 1)$ for $j = 0, 1, 2, \ldots$, it follows from \eqref{p real part} and $\text{Re}\left(\varphi^{(p)}(0)\right) = 0$ that
\begin{equation} \label{j}
\int_0^\infty r^n r^{j(n + 1)} u(r) \,dr = 0, \quad\quad j = 0, 1, 2, \ldots,
\end{equation}
where
$$u(r) = e_1(r)\sin\left(|b|\alpha r^{k+1}\sin(\gamma-\lambda)  \right) + e_2(r)\sin\left(|b|\alpha r^{k+1}\sin(\gamma+\lambda)  \right).$$
Let
\begin{equation} \label{ps}
\exp\left\{2\beta r^{n+1} \right\} = \sum_{j=0}^{\infty} c_j r^{j(n+1)}
\end{equation}
represent the power series expansion of $\exp\left\{2\beta r^{n+1}\right\}$ where the constants $c_j$ are the Taylor coefficients. From \eqref{j} and \eqref{ps}, we obtain
$$\int_0^\infty r^n \exp\left\{2\beta r^{n+1}\right\} u(r) \,dr = \sum_{j=0}^{\infty} c_j \int_0^\infty r^n r^{j(n+1)} u(r) \,dr = 0,$$
which yields
\begin{equation*}
\begin{split}
&\int_0^{\infty} r^n \exp\left\{|b|\alpha r^{k+1} \cos(\gamma-\lambda)+\beta r^{n+1} \right\} \sin\left(|b|\alpha r^{k+1} \sin(\gamma-\lambda) \right) \; dr\\
+&\int_0^{\infty} r^n \exp\left\{|b|\alpha r^{k+1} \cos(\gamma+\lambda)+\beta r^{n+1} \right\} \sin\left(|b|\alpha r^{k+1} \sin(\gamma+\lambda) \right) \; dr=0.
\end{split}
\end{equation*}
Noting that the sum of these two integrals equals zero, we have three cases.\\

{\it Case 1.} Suppose that $\sin(\gamma-\lambda)\sin(\gamma+\lambda)>0$. Then we have
\begin{equation*}
\begin{split}
&\int_0^{\infty} r^n \exp\left\{|b|\alpha r^{k+1} \cos(\gamma-\lambda)+ \beta r^{n+1} \right\} \sin\left(|b\alpha\sin(\gamma-\lambda)| r^{k+1}  \right) \; dr\\
+&\int_0^{\infty} r^n \exp\left\{|b|\alpha r^{k+1} \cos(\gamma+\lambda)+\beta r^{n+1} \right\} \sin\left(|b\alpha\sin(\gamma+\lambda)|r^{k+1} \right) \; dr=0.
\end{split}
\end{equation*}
By using the change of variables
\begin{equation}\label{ch of var}
x = |b\alpha\sin(\gamma-\lambda)| r^{k+1}  \quad \hbox{and} \quad  y = |b\alpha\sin(\gamma+\lambda)| r^{k+1},
\end{equation}
we deduce that
\begin{equation} \label{cv-1}
L\int_0^{\infty} x^a \exp \left\{cx + dx^{\mu} \right\} \sin x \; dx +M\int_0^{\infty} y^a \exp \left\{sy + ty^{\mu} \right\} \sin y \; dy = 0,
\end{equation}
where
\begin{equation*}
\begin{split}
\mu &= \frac{n+1}{k+1}, \quad\quad  L = \frac{1}{|\sin(\gamma-\lambda)|^{\mu}},  \quad\quad  a = \frac{n-k}{k+1},\\  
c& = \frac{\alpha\cos(\gamma-\lambda)}{|\alpha\sin(\gamma-\lambda)|},  \quad\quad  d = \frac{\beta}{|b\alpha\sin(\gamma-\lambda)|^{\mu}},  \quad\quad  M = \frac{1}{|\sin(\gamma+\lambda)|^{\mu}},\\
s& = \frac{\alpha\cos(\gamma+\lambda)}{|\alpha\sin(\gamma+\lambda)|},  \quad\quad  t = \frac{\beta}{|b\alpha\sin(\gamma+\lambda)|^{\mu}}.
\end{split}
\end{equation*}
Since $\mu > 1$, $d > 0$ and $t > 0$, the two integrands in \eqref{cv-1} both satisfy the conditions of the integrand in \eqref{-infty}. Hence, from Lemma~\ref{inc}, 
$$
L\int_0^{\infty} x^a \exp \left\{cx + d x^{\mu} \right\} \sin x \; dx +M\int_0^{\infty} y^a \exp \left\{sy + ty^{\mu} \right\} \sin y \; dy  = -\infty,
$$
which contradicts \eqref{cv-1}. Therefore, Case 1 cannot occur.\\

{\it Case 2.} Suppose that $\sin(\gamma-\lambda)\sin(\gamma+\lambda) < 0$. Then we have
\begin{equation*} 
\begin{split}
&\int_0^{\infty} r^n \exp\left\{|b|\alpha r^{k+1} \cos(\gamma-\lambda)+\beta r^{n+1} \right\} \sin\left(|b\alpha \sin(\gamma-\lambda)| r^{k+1}  \right) \; dr\\
=&\int_0^{\infty} r^n \exp\left\{|b|\alpha r^{k+1} \cos(\gamma+\lambda)+\beta r^{n+1} \right\} \sin\left(|b\alpha \sin(\gamma+\lambda)| r^{k+1} \right) \; dr.
\end{split}
\end{equation*}
As in Case 1, we use the change of variables \eqref{ch of var}, and deduce that
\begin{equation*} 
L\int_0^{\infty} x^a \exp \left\{cx + dx^{\mu} \right\} \sin x \; dx =M\int_0^{\infty} y^a \exp \left\{sy + ty^{\mu} \right\} \sin y \; dy,
\end{equation*}
which can be rewritten as
\begin{equation} \label{cv-2}
\int_0^{\infty} x^a F(x) \sin x \; dx = 0,
\end{equation}
where
$$F(x) = L\exp \left\{cx + dx^{\mu} \right\} - M\exp \left\{sx + tx^{\mu} \right\}.$$

Suppose that $c = s$ and $d = t$ both hold. Since $\sin(\gamma-\lambda)\sin(\gamma+\lambda) < 0$, we would then obtain
\begin{equation} \label{cos and sin rest}
\cos(\gamma-\lambda) = \cos(\gamma + \lambda) \quad \hbox{and} \quad \sin(\gamma - \lambda) = -\sin(\gamma + \lambda).
\end{equation}
It can be verified that \eqref{cos and sin rest} leads to a contradiction. Therefore, we cannot have both $c = s$ and $d = t$. 

Then from Lemma~\ref{inc}, it can be deduced that either
$$\int_0^{\infty} x^a F(x) \sin x \; dx = -\infty \quad\quad \hbox{or} \quad\quad \int_0^{\infty} x^a F(x) \sin x \; dx = +\infty$$
must hold, which contradicts \eqref{cv-2}. Thus, Case 2 cannot occur.\\

{\it Case 3.} Suppose that $\sin(\gamma-\lambda)\sin(\gamma+\lambda)=0$. Assume first that $\sin(\gamma-\lambda)=0$. Then it can be verified that $\sin(\gamma+\lambda) \not= 0$. We have 
\begin{equation*} 
\int_0^{\infty} r^n \exp\left\{|b|\alpha r^{k+1} \cos(\gamma+\lambda)+\beta r^{n+1} \right\} \sin\left(|b\alpha\sin(\gamma+\lambda)|r^{k+1} \right) \; dr=0.
\end{equation*}
By using the change of variable $x = |b\alpha\sin(\gamma+\lambda)| r^{k+1}$, we obtain
\begin{equation} \label{cv-3}
\int_0^{\infty} x^a \exp \left\{sx + tx^{\mu} \right\} \sin x \; dx = 0.
\end{equation}
But it follows from Lemma~\ref{inc} that
    $$
    \int_0^{\infty} x^a \exp \left\{sx + tx^{\mu} \right\} \sin x \; dx = -\infty,
    $$
which contradicts \eqref{cv-3}. By using the analogous argument, we also get a contradiction when $\sin(\gamma+\lambda)=0$. Hence, Case 3 cannot occur.\\

Since Cases 1, 2 and 3 cannot occur, we have a contradiction. Hence, our original assumption $\varphi(z) \equiv 0$ is false. Thus, $\varphi(z) \not\equiv 0$. The proof of Theorem~\ref{phi exist} is complete.\hfill$\Box$

\section{Properties of $\varphi(z)$} \label{phi properties}

From Theorem~\ref{phi exist}, the contour integral function $\varphi(z)$ is a nontrivial solution of equation \eqref{phi eq}. In this section we derive properties of $\varphi(z)$ which generalize properties of the Airy integral $\text{Ai}(z)$.   

\begin{theorem} \label{phi order}
The order of $\varphi$ in \eqref{phi def} satisfies $\rho(\varphi) = 1 + 1/n$.
\end{theorem}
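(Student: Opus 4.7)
The plan is to prove equality by establishing matching upper and lower bounds. The upper bound $\rho(\varphi) \leq 1 + 1/n$ will come from a direct estimate of the contour integral \eqref{phi def}, while the lower bound $\rho(\varphi) \geq 1 + 1/n$ will be extracted from the differential equation \eqref{phi eq} after verifying that $\varphi$ is transcendental.

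For the upper bound, parametrize $C$ by $w = re^{\pm i\pi/(n+1)}$, $r \in [0,\infty)$, so that on both arms $\beta w^{n+1} = -r^{n+1}/(n+1)$ is real and negative, providing exponential decay in $r$. The remaining contributions to the real part of the exponent in \eqref{phi def} are controlled by $r|z| + C_1 r^{k+1}$ for a constant $C_1 = C_1(b)$. The one-variable function $r \mapsto r|z| - r^{n+1}/(n+1)$ attains its maximum $\tfrac{n}{n+1}|z|^{(n+1)/n}$ at $r_* = |z|^{1/n}$; since $k + 1 \leq n$, the perturbation $C_1 r^{k+1}$ is strictly lower order, and splitting the integral at $r_*$ and estimating each piece yields $|\varphi(z)| \leq A \exp\bigl(B |z|^{(n+1)/n}\bigr)$ for positive constants $A, B$. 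This gives $\rho(\varphi) \leq 1 + 1/n$.

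For the lower bound, I would first confirm that $\varphi$ is transcendental: any polynomial $f$ of degree $m \geq 0$ substituted into \eqref{phi eq} forces the leading term of $(-1)^{n+1} z f$, of degree $m+1$, to be cancelled by contributions from $f^{(n)}$ and $b f^{(k)}$, but those have degree at most $\max(m - n, m - k) < m + 1$, so only $f \equiv 0$ solves \eqref{phi eq}. Since Theorem~\ref{phi exist} gives $\varphi \not\equiv 0$, $\varphi$ is transcendental, and Valiron's representation \eqref{Valiron} applies with some rational $d = \rho(\varphi) > 0$. To pin down $d$, I would apply Wiman--Valiron theory: along a suitable sequence of near-maximum points $z_r$ with $|z_r| = r$, one has $\varphi^{(j)}(z_r)/\varphi(z_r) = (\nu(r)/z_r)^j (1 + o(1))$ outside an exceptional set of radii, with central index $\nu(r) \sim d\, c\, r^{d}$. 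Dividing \eqref{phi eq} through by $\varphi(z_r)$ produces three terms of magnitudes $r^{n(d-1)}$, $r^{k(d-1)}$, and $r$. Any asymptotic balance forces $d > 1$ (otherwise the derivative terms remain bounded and cannot cancel the unbounded $z$-term), and for $d > 1$ we have $n(d-1) > k(d-1)$, so the $\varphi^{(n)}$-term dominates the $b\varphi^{(k)}$-term and must balance $z$, giving $n(d-1) = 1$, i.e., $d = 1 + 1/n$.

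The main obstacle is the lower bound, specifically running the Wiman--Valiron comparison so as to rigorously exclude competing balances involving the $b\varphi^{(k)}$-term. If that route proves awkward, an alternative is a direct steepest-descent analysis of \eqref{phi def}: the saddle-point equation $w^n + (-1)^{k+1} b w^k = z$ has, for large $|z|$, a dominant saddle $w_0 = z^{1/n}(1 + o(1))$ with value $-\tfrac{n}{n+1} z^{(n+1)/n}(1 + o(1))$. In any open sector where $\mathrm{Re}\bigl(z^{(n+1)/n}\bigr) < 0$, deforming $C$ through $w_0$ yields $|\varphi(z)| \geq c_1 \exp\bigl(c_2 |z|^{(n+1)/n}\bigr)$, directly giving $\rho(\varphi) \geq 1 + 1/n$.
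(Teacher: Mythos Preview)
Your proposal is correct, and the initial step---ruling out nontrivial polynomial solutions so that $\varphi$ is transcendental---matches the paper exactly. From there the routes diverge. The paper simply cites \cite[Theorem~1]{GSW}, which classifies the possible orders of transcendental solutions of \eqref{lde}; for equation~\eqref{phi eq} that list collapses to the single value $1+1/n$, and the proof is done in one line. You instead rederive that classification for this particular equation: a direct contour estimate gives $\rho(\varphi)\le 1+1/n$, and a Wiman--Valiron balance in \eqref{phi eq} forces $n(d-1)=1$. Your Wiman--Valiron step is essentially the engine behind \cite[Theorem~1]{GSW} specialized to this case, so the two arguments are logically equivalent; yours is more self-contained, the paper's is much shorter. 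Note also that your Wiman--Valiron analysis already pins down $d$ exactly (the only admissible balance is $n(d-1)=1$), so the separate upper-bound estimate, while correct, is not strictly needed---though it is a pleasant direct calculation and foreshadows the sharper decay estimate the paper proves later in Theorem~\ref{exp decay}.
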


\begin{proof} Observe first that \eqref{phi eq} cannot possess a nontrivial polynomial solution. Therefore, it follows from Theorem~\ref{phi exist} that $\varphi(z)$ must be transcendental. The assertion now follows from {\cite [Theorem 1] {GSW}}.
\end{proof}

Since the Airy integral $\text{Ai}(z)$ satisfies $\rho(\text{Ai}) = 3/2$ and has only negative real zeros, where the three critical rays of equation \eqref{Airy DE} are $\arg z = \pi, \pm\pi/3$, it follows from the asymptotic theory of integration that $|\text{Ai}(z)|$ has exponential decay growth of order $\exp\{-|z|^{3/2}\}$ on any ray in the sector $-\pi/3 < \arg z < \pi/3$, see \cite [Ch.~7.4] {Hille}, \cite{Olver}. This property of $\text{Ai}(z)$ is a particular case of the following general result. 

\begin{theorem} \label{exp decay}
Let $n \geq 2$. If $z = |z|e^{i\theta}$, where $\theta$ satisfies
\begin{equation} \label{decay sector}
-\frac{n\pi}{2n+2} < \theta < \frac{n\pi}{2n+2},
\end{equation}
then there exist positive constants $K = K(\theta)$ and $M = M(\theta)$ that depend only on $\theta$, such that $\varphi$ in \eqref{phi def} satisfies
\begin{equation} \label{phi exp decay}
|\varphi(z)| \leq M \exp\{-K|z|^{1+1/n}\}.
\end{equation}
\end{theorem}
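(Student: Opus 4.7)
My plan is to apply the method of steepest descent to the contour integral \eqref{phi def}. Fix $z = |z|e^{i\theta}$ with $|\theta| < n\pi/(2n+2)$, and set $g(w) = -wz + b\alpha w^{k+1} + \beta w^{n+1}$, so that $\varphi(z) = (2\pi i)^{-1}\int_C e^{g(w)}\,dw$. Saddle points of $g$ satisfy $g'(w) = -z + b(-1)^{k+1}w^{k} + w^{n} = 0$. For $|z|$ large, the dominant balance $w^{n} = z$ produces $n$ saddles close to $|z|^{1/n}e^{i(\theta + 2\pi j)/n}$, $j = 0, \dots, n-1$. I will focus on the saddle $w_0$ nearest the principal $n$th root $z^{1/n} = |z|^{1/n}e^{i\theta/n}$. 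A routine perturbation argument (treating $b(-1)^{k+1}w^{k}$ as a lower-order correction to $w^{n} = z$) gives $w_0 = z^{1/n}\bigl(1 + O(|z|^{(k-n)/n})\bigr)$ and hence
\[ g(w_0) = -\tfrac{n}{n+1}\,z^{1+1/n} + O\!\left(|z|^{(k+1)/n}\right). \]
Because $|\theta| < n\pi/(2n+2)$ forces $\cos((1+1/n)\theta) > 0$, and $(k+1)/n < 1 + 1/n$, for any positive $K < \tfrac{n}{n+1}\cos((1+1/n)\theta)$ there is an $R_0 > 0$ with $\text{Re}\,g(w_0) \leq -K|z|^{1+1/n}$ whenever $|z| \geq R_0$.

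Next I will deform $C$ to a contour $\Gamma$ that passes through $w_0$ along the steepest descent direction of $\text{Re}\,g$ and then exits to infinity inside the same two asymptotic sectors containing the original rays $\arg w = \pm\pi/(n+1)$. Because $g$ is entire and the factor $\exp(\beta w^{n+1})$ supplies uniform decay inside the $n+1$ open sectors where $\text{Re}(w^{n+1}) < 0$ (and the two endpoint-sectors of $C$ are among these), connecting $C$ and $\Gamma$ by arcs of large radius inside those sectors and letting the radius tend to $\infty$ justifies the deformation via Cauchy's theorem. Near $w_0$ the expansion $g(w) = g(w_0) + \tfrac{1}{2}g''(w_0)(w-w_0)^2 + \cdots$ with $g''(w_0) = nw_0^{n-1}(1 + o(1))$ shows that along the steepest descent line the quadratic term is real and negative of size $\sim n|z|^{(n-1)/n}|w-w_0|^2$, so locally the integral behaves like a Gaussian of width $|z|^{-(n-1)/(2n)}$.

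A standard Laplace-type estimate then produces
\[ |\varphi(z)| \leq \frac{1}{2\pi}\int_\Gamma |e^{g(w)}|\,|dw| \leq C\,|z|^{-(n-1)/(2n)}\,e^{\text{Re}\,g(w_0)} \leq C\,|z|^{-(n-1)/(2n)}\,e^{-K|z|^{1+1/n}} \]
for $|z| \geq R_0$. The polynomial factor is absorbed by slightly decreasing $K$, and continuity of the entire function $\varphi$ on the compact set $|z| \leq R_0$ allows us to enlarge $M$ to cover that region, so \eqref{phi exp decay} holds throughout the sector \eqref{decay sector}.

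The principal obstacle will be the rigorous justification of the contour deformation together with control of $\int_\Gamma |e^{g(w)}|\,|dw|$ on the tails of $\Gamma$ where $|w-w_0|$ is no longer small; there the subleading term $b\alpha w^{k+1}$ must be dominated by the leading $\beta w^{n+1}$-decay. This rests on the strict inequality $(k+1)/n < 1 + 1/n$, which provides the needed slack, but the estimates must be made uniform in both $|z|$ and in the position along $\Gamma$, which requires careful bookkeeping.
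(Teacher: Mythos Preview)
Your steepest-descent outline is sound in principle and would, if carried through, yield the bound \eqref{phi exp decay} (and indeed a full asymptotic expansion, which is more than the theorem asks for). The saddle location $w_0 \sim z^{1/n}$, the value $g(w_0) \sim -\tfrac{n}{n+1}z^{1+1/n}$, and the observation that $|\theta| < n\pi/(2n+2)$ forces $\cos\bigl((1+1/n)\theta\bigr) > 0$ are all correct.

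The paper, however, takes a markedly more elementary route and never invokes saddle points at all. It fixes $\theta$, rotates the two rays of $C$ to new angles $\mu = -\tfrac{\pi}{n+1} - \tfrac{\theta}{n}$ and $\tau = \tfrac{\pi}{n+1} - \tfrac{\theta}{n}$, and then replaces the vertex at the origin by a circular arc of radius $R = B|z|^{1/n}$, giving a three-piece contour $J = J_1 + J_2 + J_3$. The choice of $\mu,\tau$ is arranged so that on the straight pieces $J_1, J_3$ one has simultaneously $\cos((n+1)\mu), \cos((n+1)\tau) < 0$ (so $\beta w^{n+1}$ provides decay at infinity) and $\cos(\mu+\theta), \cos(\tau+\theta) > 0$ (so $-wz$ already contributes $-r|z|\cos(\cdot)$ to the real part). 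Crude absolute-value bounds on each of $I_1, I_2, I_3$ then give \eqref{phi exp decay} directly. No Laplace estimate, no analysis of the steepest-descent direction, and---crucially---no verification that the global steepest-descent path through $w_0$ actually connects the two prescribed valleys at $\arg w = \pm\pi/(n+1)$. That last topological point is exactly the ``principal obstacle'' you flag, and the paper's approach sidesteps it entirely. What your approach buys in return is a sharper constant (the true $K = \tfrac{n}{n+1}\cos((1+1/n)\theta)$ rather than an unspecified $K(\theta)$) and the machinery for a complete asymptotic expansion; what the paper's approach buys is a short, fully rigorous argument with no case analysis of descent paths.
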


\begin{remark} Two numbers in Theorem~\ref{exp decay} are best possible. Since $\rho(\varphi) = 1 + 1/n$ from Theorem~\ref{phi order}, it follows from \eqref{Valiron} and the Phragm\'en-Lindel\"of principle \cite{Cartwright} that (i) the exponent $1 + 1/n$ in \eqref{phi exp decay} could not be larger and (ii) the length $n\pi/(n + 1)$ of the interval \eqref{decay sector} could not be larger.\end{remark}

\noindent \emph{Proof of Theorem~\ref{exp decay}.}
We will use Cauchy's theorem twice to show that, for any $\theta$ satisfying \eqref{decay sector}, the contour $C$ in \eqref{phi def} can be replaced by two other contours (which depend on $\theta$) without changing the value of $\varphi(z)$ when $\arg z = \theta$. 

Let $\theta$ be a fixed constant satisfying \eqref{decay sector}. Then let $\mu = \mu(\theta)$ and $\tau = \tau(\theta)$ denote the constants
\begin{equation} \label{mu tau def}
\mu = -\frac{\pi}{n+1} - \frac{\theta}{n} \quad\quad \hbox{and} \quad\quad \tau = \frac{\pi}{n+1} - \frac{\theta}{n}.
\end{equation}
From \eqref{decay sector} and \eqref{mu tau def}, we obtain the following inequalities: 
\begin{equation} \label{mu rest}
-\frac{3\pi}{2} < (n+1)\mu < -\frac{\pi}{2} \quad\quad \hbox{and} \quad\quad -\frac{\pi}{2} < \mu + \theta < \frac{\pi}{2},
\end{equation}
\begin{equation} \label{tau rest}
\frac{\pi}{2} < (n+1)\tau < \frac{3\pi}{2} \quad\quad \hbox{and} \quad\quad -\frac{\pi}{2} < \tau + \theta < \frac{\pi}{2}.
\end{equation}

Next let $z$ be a point satisfying $\arg z = \theta$, i.e., set $z = |z|e^{i\theta}$. Then let $C(\mu)$ be the arc of the circle $|w| = |z|$ from $w = |z|e^{-i\pi/(n+1)}$ to $w = |z|e^{i\mu}$. From \eqref{decay sector}, \eqref{mu tau def}, \eqref{mu rest}, it can be deduced that
$$\cos(n + 1)\xi \leq \cos(n + 1)\mu < 0$$
holds for any point $w = |z|e^{i\xi}$ lying on $C(\mu)$. Therefore, with $\alpha$ and $\beta$ as in \eqref{phi def}, we obtain
\begin{equation*}
\begin{split}
&\left| \frac{1}{2\pi i} \int_{C(\mu)} \exp\left\{-wz+b\alpha w^{k+1}+\beta w^{n+1}\right\} \; dw \right|\\
&\leq \frac{|z|}{2\pi} \int_{C(\mu)} \exp\left\{|z|^2+|b\alpha||z|^{k+1}+\beta |z|^{n+1} \cos(n+1)\xi \right\} \; |d\xi|\\
&\leq |z| \exp\left\{|z|^2+|b\alpha||z|^{k+1}+\beta |z|^{n+1} \cos(n+1)\mu \right\}.
\end{split}
\end{equation*}
Since $2 \leq k + 1 < n + 1$, $\beta > 0$ and $\cos(n+1)\mu < 0$, we obtain
\begin{equation} \label{mu integral}
\frac{1}{2 \pi i} \int_{C(\mu)} \exp\left\{-wz+b\alpha w^{k+1}+\beta w^{n+1}\right\} \; dw  \to 0 \quad \hbox{as} \quad |z| \to \infty.
\end{equation} 
Similarly, by using \eqref{decay sector}, \eqref{mu tau def}, \eqref{tau rest}, we obtain
\begin{equation} \label{tau integral}
\frac{1}{2 \pi i} \int_{C(\tau)} \exp\left\{-wz+b\alpha w^{k+1}+\beta w^{n+1}\right\} \; dw  \to 0 \quad \hbox{as} \quad |z| \to \infty,
\end{equation}
where $C(\tau)$ denotes the arc of the circle $|w| = |z|$ from $w = |z|e^{i\pi/(n+1)}$ to $w = |z|e^{i\tau}$.

Now we observe from Cauchy's theorem that 
$$\frac{1}{2 \pi i} \int_{E_j} \exp\left\{-wz+b\alpha w^{k+1}+\beta w^{n+1}\right\} \; dw = 0,  \quad \quad j = 1, 2, $$
where $E_1$ is the closed curve consisting of the line segment from $0$ to $|z|e^{-i\pi/(n + 1)}$, the arc $C(\mu)$, and the line segment from $|z|e^{i\mu}$ to $0$, and $E_2$ is the closed curve consisting of the line segment from $0$ to $|z|e^{i\pi/(n + 1)}$, the arc $C(\tau)$, and the line segment from $|z|e^{i\tau}$ to $0$. Then by letting $|z| \to \infty$, it can be deduced from \eqref{phi def}, \eqref{mu integral} and \eqref{tau integral}, that 
\begin{equation} \label{L int}
\varphi(z)=\frac{1}{2 \pi i}\int_{L} \exp\left\{-wz+b\alpha w^{k+1}+\beta w^{n+1}\right\} \; dw,
\end{equation}
where the contour $L = L(\theta)$ runs from $\infty$ to $0$ along $\arg z = \mu$ and then from $0$ to $\infty$ along $\arg z = \tau$. Note that $L = C$ when $\theta = 0$.

\begin{figure}[H]\label{contour path L}
    \begin{center}
    \subfigure[when $\theta>0$]{
    \begin{tikzpicture}[scale=0.65]
    \draw[->](-2,0)--(5,0)node[left,below]{$x$};
    \draw[->](0,-3)--(0,4)node[right]{$y$};
    \draw[thick][domain=0:4] plot(\x,{-\x/2})node[right,font=\tiny]{$\arg w=-\pi/(n+1)$};
    \draw[thick][domain=0:3.5] plot(\x,{-\x})node[right,font=\tiny]{$\arg w=\mu$};
    \draw[thick][domain=0:4] plot(\x,{\x/2})node[right,font=\tiny]{$\arg w=\pi/(n+1)$};
    \draw[thick][domain=0:4] plot(\x,{\x/6})node[right,font=\tiny]{$\arg w=\tau$};
    \draw[thick][->](3,1.5)--(3.4,1.7);
    \draw[thick][->](4,-2)--(3.2,-1.6);
    \draw[thick][->](3.5,-3.5)--(3,-3);
    \draw[thick][->](3,3/6)--(3.1,3.1/6);
    \draw[-,dashed,thick](0.4,-0.3)--(2.6,-1.4);
    \draw[-,dashed,thick][->](0.4,-0.3)--(1.4,-0.8);
    \draw[-,dashed,thick](0.4,-0.3)--(2.1,-2);
    \draw[-,dashed,thick][->](1.614,-1.5)--(1,-0.9);
    \draw[-,dashed,thick](2.6,-1.4)arc(-30:-52:2)node[right=1pt,font=\tiny]{$E_1$};
    \draw[-,dashed,thick][->](2.6,-1.4)arc(-30:-45:2);
    \draw[-,dashed,thick](0.6,0.2)--(3,1.4);
    \draw[-,dashed,thick][->](0.6,0.2)--(1.6,0.7);
    \draw[-,dashed,thick](0.6,0.2)--(3.2,3.2/6+0.1);
    \draw[-,dashed,thick][<-](1.6,1.6/6+0.1)--(1.75,1.75/6+0.1);
    \draw[-,dashed,thick](3,1.4)arc(30:10:2.614)node[above=8pt,right=-4pt,font=\tiny]{$E_2$};
    \draw[-,dashed,thick][->](3,1.4)arc(30:18:2.614);
    \draw[thick](0,0)circle[radius=0.3pt]node[left=4pt,below]{$o$};
    \end{tikzpicture}
    }
    \subfigure[when $\theta<0$]{
    \begin{tikzpicture}[scale=0.65]
    \draw[->](-2,0)--(5,0)node[left,below]{$x$};
    \draw[->](0,-3)--(0,4)node[right]{$y$};
    \draw[thick][domain=0:4] plot(\x,{-\x/2})node[right,font=\tiny]{$\arg w=-\pi/(n+1)$};
    \draw[thick][domain=0:3.5] plot(\x,{\x})node[right,font=\tiny]{$\arg w=\tau$};
    \draw[thick][domain=0:4] plot(\x,{\x/2})node[right,font=\tiny]{$\arg w=\pi/(n+1)$};
    \draw[thick][domain=0:4] plot(\x,{-\x/6})node[right,font=\tiny]{$\arg w=\mu$};
    \draw[thick][->](1.2,0.6)--(3.4,1.7);
    \draw[thick][->](4,-2)--(2.4,-1.2);
    \draw[thick][<-](3,3)--(2,2);
    \draw[thick][<-](3,-3/6)--(3.1,-3.1/6);
    \draw[-,dashed,thick](0.4,0.3)--(3,1.6);
    \draw[-,dashed,thick][->](0.4,0.3)--(1.4,0.8);
    \draw[-,dashed,thick](0.4,0.3)--(2.4,2.3);
    \draw[-,dashed,thick][->](1.614,1.5)--(1,0.9);
    \draw[-,dashed,thick](3,1.6)arc(30:53:2.4)node[right=1pt,font=\tiny]{$E_2$};
    \draw[-,dashed,thick][->](3,1.6)arc(30:43:2.4);
    \draw[-,dashed,thick](0.6,-0.2)--(3,-1.4);
    \draw[-,dashed,thick][->](1.8,-1.8/2+0.1)--(2,-2/2+0.1);
    \draw[-,dashed,thick](0.6,-0.2)--(3.2,-3.2/6-0.1);
    \draw[-,dashed,thick][<-](1.6,-1.6/6-0.1)--(2,-2/6-0.1);
    \draw[-,dashed,thick](3,-1.4)arc(-30:-7:2.4)node[below=8pt,right=-4pt,font=\tiny]{$E_1$};
    \draw[-,dashed,thick][->](3,-1.4)arc(-28:-15:2.4);
    \draw[thick](0,0)circle[radius=0.3pt]node[left=4pt,below]{$o$};
    \end{tikzpicture}
    }
    \end{center}
    \begin{quote}
    \caption{Contours $L$, $C$ and closed curves $E_1$, $E_2$}
    \end{quote}
    \end{figure}
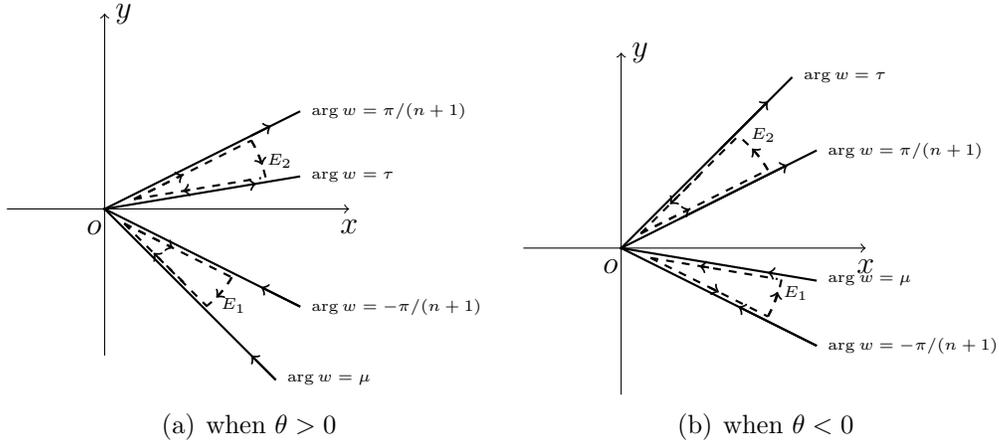

We next apply Cauchy's theorem to \eqref{L int}, and obtain further that for any constant $R > 0$,  
    \begin{equation}\label{phi new}
    \varphi(z)=\frac{1}{2 \pi i}\int_{J} \exp\left\{-wz+b\alpha w^{k+1}+\beta w^{n+1}\right\} \; dw,
    \end{equation}
where $J$ is the contour defined by $J=J_1+J_2+J_3$ with
    \begin{equation*}
    \begin{array}{cll}
    J_1&: w=re^{i\mu}, \quad  &r~\text{goes from}~\infty~\text{to}~R,\\
    J_2&: w=Re^{i\xi},  \quad &\xi~\text{goes from}~\mu~\text{to}~\tau, \\
    J_3&: w=re^{i\tau}, \quad  &r~\text{goes from}~R~\text{to}~\infty.
    \end{array}
    \end{equation*}
		
\begin{figure}[H]\label{contour path J}
    \begin{center}
     \subfigure[when $\theta>0$]{
    \begin{tikzpicture}[scale=0.65]
    \draw[->](-2,0)--(5,0)node[left,below]{$x$};
    \draw[->](0,-3)--(0,4)node[right]{$y$};
    \draw[-,dashed][domain=1.2:4] plot(\x,{-\x/2})node[right,font=\tiny]{$\arg w=-\pi/(n+1)$};
    \draw[thick][domain=0.94:3.5] plot(\x,{-\x})node[right,font=\tiny]{$\arg w=\mu$};
    \draw[-,dashed][domain=1.2:4] plot(\x,{\x/2})node[right,font=\tiny]{$\arg w=\pi/(n+1)$};
    \draw[thick][domain=1.3:4] plot(\x,{\x/6})node[right,font=\tiny]{$\arg w=\tau$};
    \draw[-,dashed][->](1.2,0.6)--(3,1.5);
    \draw[-,dashed][->](4,-2)--(2.4,-1.2);
    \draw[thick][->](3.5,-3.5)--(2,-2);
    \draw[-,dashed](0.9,-0.9)--(0,0);
    \draw[thick][->](0.95,-0.95)arc(-45:-25:2.3/2);
    \draw[-,dashed](0.85,-0.85)arc(-45:15:2.3/2-0.1);
    \draw[thick][->](2,2/6)--(3,3/6);
    \draw[-,dashed](1.2,1.2/6)--(0,0);
    \draw[thick](1.2,-0.6)arc(-30:15:2.3/2);
    \draw[-,dashed](1.3,1.3/6)arc(10:28:2.3/2);
    \draw[thick][->](1.2,-0.6)arc(-30:5:2.3/2)node[below=5pt,right,font=\tiny]{$|w|=R$};
    \draw[thick](0,0) circle [radius=0.3pt]node[left=4pt,below]{$o$};
    \end{tikzpicture}
    }
     \subfigure[when $\theta<0$]{
    \begin{tikzpicture}[scale=0.65]
    \draw[->](-2,0)--(5,0)node[left,below]{$x$};
    \draw[->](0,-3)--(0,4)node[right]{$y$};
    \draw[-,dashed][domain=1.2:4] plot(\x,{-\x/2})node[right,font=\tiny]{$\arg w=-\pi/(n+1)$};
    \draw[thick][domain=0.94:3.5] plot(\x,{\x})node[right,font=\tiny]{$\arg w=\tau$};
    \draw[-,dashed][domain=1.2:4] plot(\x,{\x/2})node[right,font=\tiny]{$\arg w=\pi/(n+1)$};
    \draw[thick][domain=1.3:4] plot(\x,{-\x/6})node[right,font=\tiny]{$\arg w=\mu$};
    \draw[-,dashed][->](1.2,0.6)--(3,1.5);
    \draw[-,dashed][->](4,-2)--(2.4,-1.2);
    \draw[thick][<-](3,3)--(2,2);
    \draw[-,dashed](0.9,0.9)--(0,0);
    \draw[thick][<-](0.95,0.95)arc(45:25:2.3/2);
    \draw[-,dashed](0.85,0.85)arc(45:-20:2.3/2-0.1);
    \draw[thick][<-](2,-2/6)--(3,-3/6);
    \draw[-,dashed](1.1,-1.1/6)--(0,0);
    \draw[thick](1.2,0.6)arc(30:-15:2.3/2);
    \draw[-,dashed](1.3,-1.3/6)arc(-10:-28:2.3/2);
    \draw[thick][<-](1.2,0.6)arc(30:-5:2.3/2)node[above=5pt,right,font=\tiny]{$|w|=R$};
    \draw[thick](0,0) circle [radius=0.3pt]node[left=4pt,below]{$o$};
    \end{tikzpicture}
    }
    \end{center}
    \begin{quote}
    \caption{Contour $J$}
    \end{quote}
    \end{figure}
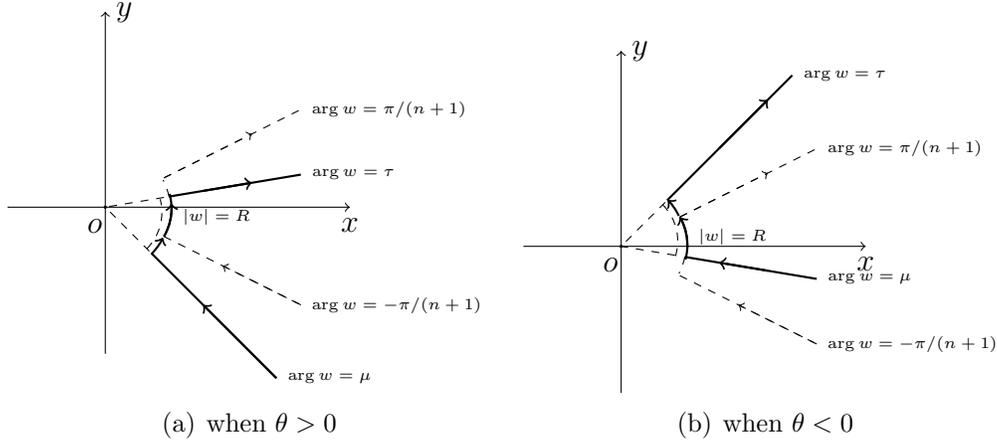

From \eqref{phi new}, 
	\begin{equation*}
	\begin{split}
	\varphi(z)=&\frac{1}{2 \pi i}\int_{\infty}^R \exp\left\{-r|z|e^{i(\mu+\theta)}
	+b\alpha r^{k+1}e^{i(k+1)\mu}+\beta r^{n+1}e^{i(n+1)\mu}\right\}\, e^{i\mu} dr\\
    &+\frac{1}{2 \pi}\int_{\mu}^{\tau}\exp\left\{-R|z|e^{i(\xi+\theta)}
    +b\alpha R^{k+1}e^{i(k+1)\xi}+\beta R^{n+1}e^{i(n+1)\xi}\right\} Re^{i\xi} \, d\xi\\
	  &+\frac{1}{2 \pi i}\int^{\infty}_R \exp\left\{-r|z|e^{i(\tau+\theta)}
	+b\alpha r^{k+1}e^{i(k+1)\tau}+\beta r^{n+1}e^{i(n+1)\tau} \right\}\, e^{i\tau} dr\\
    =&:I_1+I_2+I_3.
	\end{split}
	\end{equation*}	
Using \eqref{mu rest} and \eqref{tau rest}, let $A = A(\theta)$ denote the constant
\begin{equation} \label{A constant}
A = \min\{\cos(\mu+\theta), \; \cos(\tau+\theta)\} > 0.
\end{equation}
Then let $B = B(\theta)$ be any fixed constant that satisfies 
\begin{equation} \label{B}
0 < B < \{(n + 1)A\}^{1/n}.
\end{equation}
Choose $R=B|z|^{1/n}$. Since $\cos(\mu+\theta) > 0$ and $|\alpha| < 1 < 2\pi$, 
\begin{equation*}
    \begin{split}
    |I_1|&<\int_{B|z|^{1/n}}^\infty
    \exp\left\{-r|z|\cos(\mu+\theta)+|b|r^{k+1}+\beta r^{n+1}\cos(n+1)\mu \right\}\,dr\\
    &\leq \exp\left\{-B|z|^{1+1/n}\cos(\mu+\theta)\right\}
    \int_{B|z|^{1/n}}^\infty\exp\{|b|r^{k+1}+\beta r^{n+1}\cos(n+1)\mu\} \, dr.
    \end{split}
    \end{equation*}    
We have $\cos(n+1)\mu < 0$ from \eqref{mu rest}, and since $\beta > 0$ and $k < n$, 
\begin{equation} \label{I_1}
|I_1| < P\exp\left\{-B|z|^{1+1/n}\cos(\mu+\theta)\right\},
\end{equation}
where $P = P(\theta)$ is the following positive constant:
$$P = \int_0^\infty \exp\{|b|r^{k+1}+\beta r^{n+1}\cos(n+1)\mu\} \, dr.$$
Similarly, since $\cos(\tau + \theta) > 0$ and $\cos(n+1)\tau < 0$ from \eqref{tau rest}, we obtain
\begin{equation} \label{I_3}
|I_3| < Q \exp\left\{-B|z|^{1+1/n}\cos(\tau+\theta)\right\},
\end{equation}
where $Q = Q(\theta)$ is the following positive constant:
$$Q = \int_0^\infty \exp\{|b|r^{k+1}+\beta r^{n+1}\cos(n+1)\tau\} \, dr.$$
Last, for $|I_2|$, we have
$$|I_2| \leq \frac{B|z|^{1/n}}{2 \pi} \int_{\mu}^{\tau}
	\exp\left\{B(\beta B^n - \cos(\xi + \theta))|z|^{1+1/n} + |b|B^{k+1}|z|^{(k+1)/n}\right\}\,d\xi.$$
From \eqref{mu rest} and \eqref{tau rest}, 
$$-\frac{\pi}{2} < \mu + \theta \leq \xi + \theta \leq \tau + \theta < \frac{\pi}{2}, \quad \quad \mu \leq \xi \leq \tau.$$
Thus, from \eqref{A constant}, we see that
$$\cos(\xi + \theta) \geq A,  \quad \quad \mu \leq \xi \leq \tau.$$
Hence, we obtain
\begin{equation} \label{I_2}
|I_2| \leq B|z|^{1/n} \exp\left\{B(\beta B^n - A)|z|^{1+1/n}+|b| B^{k+1}|z|^{(k+1)/n}\right\},
\end{equation}
where $B(\beta B^n - A) < 0$ from $\beta = 1/(n+1)$ and \eqref{B}. Since $\varphi(z) = I_1 + I_2 + I_3$ when $\arg z = \theta$, we deduce that \eqref{phi exp decay} holds from an observation of \eqref{I_1}, \eqref{I_3}, \eqref{I_2}. This proves Theorem~\ref{exp decay}.\hfill$\Box$

\begin{remark} As stated above, Theorem~\ref{exp decay} generalizes the exponential decay growth property of the Airy integral $\text{Ai}(z)$ to $\varphi(z)$. Since all the zeros of $\text{Ai}(z)$ are real and negative, it is natural to ask whether the distribution of the zeros of $\varphi(z)$ also has some interesting feature(s)?\end{remark}

We now generalize another property of the Airy integral. Let $u(z)$ be the function $\varphi(z)$ in \eqref{phi def} when $b = 0$, that is,
\begin{equation} \label{u}
u(z) = \frac{1}{2\pi i} \int_{C}\exp\bigg\{-wz+\frac{1}{n+1}w^{n+1}\bigg\}\,dw,
\end{equation}
where the contour $C$ runs from $\infty$ to $0$ along $\arg w = -\pi/(n+1)$ and then from $0$ to $\infty$ along $\arg w = \pi/(n+1)$. Then from Theorems~\ref{phi exist} and \ref{phi order}, $u(z)$ is a transcendental solution of the equation 
\begin{equation} \label{b=0 eq}
f^{(n)} +(-1)^{n+1} zf = 0.
\end{equation}
When $n = 2$, $u(z)$ is the Airy integral $\text{Ai}(z)$ in \eqref{Airy} and \eqref{b=0 eq} is Airy's equation \eqref{Airy DE}.

If $\beta_1, \beta_2, \ldots , \beta_{n+1}$ are the $n+1$ distinct roots of unity, and if $f_j(z)$ denotes the function
\begin{equation} \label{fj def}
f_j(z) = u(\beta_j z), \quad j = 1, 2, \ldots , n+1,
\end{equation}
then $f_1, f_2, \ldots , f_{n+1}$ are solutions of equation \eqref{b=0 eq}. The following result is a generalization of the known property of the Airy integral $\text{Ai}(z)$ in \eqref{Ai comp}.

\begin{theorem} \label{fj independent}

The functions $f_1, f_2, \ldots , f_{n+1}$ in \eqref{fj def} are solutions of \eqref{b=0 eq} of order $1 + 1/n$, and any $n$ of these $n + 1$ functions are linearly independent.

\end{theorem}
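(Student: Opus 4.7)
My plan is to address the three assertions in turn: that each $f_j$ solves \eqref{b=0 eq}, that $\rho(f_j) = 1 + 1/n$, and that any $n$-subset of $\{f_1,\ldots,f_{n+1}\}$ is linearly independent. The first two are short direct consequences of $\beta_j^{n+1} = 1$. Differentiating $f_j(z) = u(\beta_j z)$ and using that $u$ solves \eqref{b=0 eq} (the special case $b=0$ of Theorem~\ref{phi exist}), a short computation gives
\begin{equation*}
f_j^{(n)}(z) = \beta_j^n u^{(n)}(\beta_j z) = \beta_j^n\cdot(-1)^n\beta_j z\, u(\beta_j z) = (-1)^n\beta_j^{n+1} z\, f_j(z) = (-1)^n z\, f_j(z),
\end{equation*}
which is \eqref{b=0 eq}. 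Since $|\beta_j|=1$, the maximum modulus satisfies $M(r, f_j) = M(r, u)$, so $\rho(f_j) = \rho(u) = 1 + 1/n$ by Theorem~\ref{phi order}.

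For linear independence, I would use a Wronskian argument. Fix any $n$-subset $\{f_{j_1},\ldots,f_{j_n}\}\subset\{f_1,\ldots,f_{n+1}\}$ and let $W(z)$ denote its Wronskian. Because \eqref{b=0 eq} has no $f^{(n-1)}$ term, Abel's identity forces $W'(z) \equiv 0$, so $W$ is a constant. It therefore suffices to show $W(0)\neq 0$. Using $f_j^{(k)}(0) = \beta_j^k u^{(k)}(0)$ and factoring $u^{(k-1)}(0)$ out of the $k$-th row, the Wronskian matrix at $0$ splits as
\begin{equation*}
W(0) = \left(\prod_{k=0}^{n-1} u^{(k)}(0)\right)\cdot\det\bigl(\beta_{j_i}^{\,k-1}\bigr)_{1\leq i,k\leq n}.
\end{equation*}
The second factor is the Vandermonde determinant of the $n$ distinct roots of unity $\beta_{j_1},\ldots,\beta_{j_n}$, hence nonzero. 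Thus the theorem reduces to proving $u^{(k)}(0)\neq 0$ for every $k\in\{0,1,\ldots,n-1\}$.

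This remaining step is the main (if brief) obstacle, and I would handle it by direct evaluation on the contour $C$. Parametrizing the two rays of $C$ by $w = r e^{\mp i\pi/(n+1)}$, the exponent reduces to $w^{n+1}/(n+1) = -r^{n+1}/(n+1)$ on both rays, and combining the two contributions yields
\begin{equation*}
u^{(k)}(0) = \frac{(-1)^k}{\pi}\,\sin\!\left(\frac{(k+1)\pi}{n+1}\right)\int_0^{\infty} r^k \exp\!\left\{-\frac{r^{n+1}}{n+1}\right\}dr.
\end{equation*}
The integral is strictly positive, and since $(k+1)/(n+1) \in (0,1)$ for each $k\in\{0,1,\ldots,n-1\}$, the sine factor does not vanish. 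Hence $W(0)\neq 0$, and any $n$ of the functions $f_1,\ldots,f_{n+1}$ are linearly independent.
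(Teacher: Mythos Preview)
Your proof is correct and follows essentially the same route as the paper: factor the Wronskian at $0$ as $\prod_{k=0}^{n-1}u^{(k)}(0)$ times a Vandermonde determinant in the distinct $\beta_{j_i}$, and then show each $u^{(k)}(0)\neq 0$ via the explicit formula $u^{(k)}(0)=\frac{(-1)^k}{\pi}\sin\bigl(\tfrac{(k+1)\pi}{n+1}\bigr)\int_0^\infty r^k e^{-r^{n+1}/(n+1)}\,dr$. Your additional observation that $W$ is constant by Abel's identity is correct but unnecessary, and your direct parametrization of $C$ to compute $u^{(k)}(0)$ replaces the paper's appeal to \eqref{p real part} and Theorem~\ref{phi real}; otherwise the arguments coincide.
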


Theorem~\ref{fj independent} shows that every solution of equation \eqref{b=0 eq} can be expressed as a contour integral function.\\ 

\noindent \emph{Proof of Theorem~\ref{fj independent}.} Since $u(z)$ in \eqref{u} is the function $\varphi(z)$ in \eqref{phi def} when $b = 0$, it follows from Theorem~\ref{phi exist}, Theorem~\ref{phi order} and \eqref{fj def}, that each $f_j$ is a solution of \eqref{b=0 eq} of order $1 + 1/n$. It remains to show that any $n$ of the $n + 1$ functions $f_1, f_2, \ldots , f_{n+1}$ are linearly independent. 

First we show that the $n$ functions $f_1, f_2, \ldots , f_n$ are linearly independent. Since $u$ is the function $\varphi$ in \eqref{phi def} when $b = 0$, it follows from Theorem~\ref{phi real} below that $u$ is real on the real axis. Hence from \eqref{p real part}, we obtain for $0 \leq p \leq n - 1$,
\begin{equation} \label{u(p)}
u^{(p)}(0)=\frac{(-1)^p}{\pi}\int_0^\infty r^p\exp\left(-\frac{1}{n+1} r^{n+1}\right)\sin\left(\frac{p+1}{n+1}\pi\right)dr \neq 0.
\end{equation}
Using \eqref{fj def} and the classical Vandermonde determinant, we calculate the Wronskian of $f_1, f_2, \ldots , f_n$ at $z = 0$:
    \begin{equation*}
    \begin{split}
    W(f_1, f_2,\ldots, f_n)(0)&=
    \begin{vmatrix}
    u(0)~&~u(0)~&\cdots&~u(0)\\
    \beta_1u'(0)~&~\beta_2u'(0)~&\cdots&~\beta_nu'(0)\\
    \vdots~&~\vdots~&~&\vdots\\
    \beta^{n-1}_1u^{(n-1)}(0)~&~\beta^{n-1}_2u^{(n-1)}(0)~&\cdots&~\beta^{n-1}_nu^{(n-1)}(0)\\
    \end{vmatrix}\\
    &=u(0)u'(0) \cdots u^{(n-1)}(0)
       \begin{vmatrix}
    1~&~1~&\cdots&~1\\
    \beta_1 ~&~\beta_2 ~&\cdots&~\beta_n \\
    \vdots~&~\vdots~&~&\vdots\\
    \beta^{n-1}_1 ~&~\beta^{n-1}_2 ~&\cdots&~\beta^{n-1}_n\\
    \end{vmatrix}\\
    &=u(0)u'(0)\cdots u^{(n-1)}(0)\prod_{1\leq i<j\leq n}(\beta_j-\beta_i) .
    \end{split}
    \end{equation*}
Since \eqref{u(p)} holds and since the $\beta_j$ are all distinct, we obtain
$$W(f_1, f_2,\ldots, f_n)(0) \not= 0.$$
Hence, the $n$ functions $f_1, f_2, \ldots , f_n$ are linearly independent. Similarly, the same argument will show that any $n$ of the $n + 1$ functions $f_j$ in \eqref{fj def} are linearly independent.\hfill$\Box$

\begin{remark} By varying the contour $C$ in \eqref{u}, we can express the functions $f_1, f_2, \ldots, f_{n+1}$ in \eqref{fj def} in a different form as follows. Let the functions $u_1, u_2, \ldots , u_{n+1}$ be defined by
\begin{equation} \label{uj}
u_j(z) = \frac{1}{2\pi i} \int_{C_j}\exp\bigg\{-wz+\frac{1}{n+1}w^{n+1}\bigg\}\,dw,
\end{equation}
where the contour $C_j$ runs from $\infty$ to 0 along $\arg w=-\frac{\pi}{n+1}+\frac{2\pi (j-1)}{n+1}$ and then
from 0 to $\infty$ along $\arg w=\frac{\pi}{n+1}+\frac{2\pi (j-1)}{n+1}$. We assume that $\beta_j = \exp\{i \frac{2\pi(j-1)}{n+1}\}$ for $j = 1, 2, \ldots , n + 1$. By using the change of variable $w=\beta_j\zeta$ in \eqref{uj}, we obtain from \eqref{u} and \eqref{fj def} that
$$u_j(z)=\frac{\beta_j}{2\pi i} \int_{C}\exp\bigg\{-\beta_j\zeta z+\frac{1}{n+1}\zeta^{n+1}\bigg\}\,d\zeta
=\beta_j u(\beta_jz)=\beta_jf_j(z)$$
for $j=1, 2, \ldots, n+1$. Thus for each $j$, the contour integral function $u_j(z)$ in \eqref{uj} is a non-zero constant multiple of the function $f_j(z)$ in \eqref{fj def}. 

Regarding solutions of \eqref{b=0 eq} that are defined by contour integrals, see page 414 of \cite{Heading}. By using a change of variable, it can be seen that, for each $n$, formula (53) in \cite{Heading} can be transformed into a non-zero constant mulitiple of \eqref{uj} above, where the constant multiple depends on $n$.  

\end{remark}

It is well known that the Airy integral $\text{Ai}(z)$ is real on the real axis. More generally, the next result shows that this property holds for $\varphi(z)$ whenever $b$ is real.

\begin{theorem} \label{phi real}
When $b \in \R$, the function $\varphi(z)$ in \eqref{phi def} is real on the real axis.
\end{theorem}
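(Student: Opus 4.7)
The plan is to establish the Schwarz reflection identity $\overline{\varphi(\bar z)}=\varphi(z)$ by direct manipulation of the contour integral, since the hypothesis $b\in\R$, together with the fact that $\alpha$ and $\beta$ defined in \eqref{phi def} are real, will make the polynomial $-wz+b\alpha w^{k+1}+\beta w^{n+1}$ transform nicely under conjugation. Once the reflection identity is proved, specializing it to $z\in\R$ gives $\overline{\varphi(z)}=\varphi(z)$ and hence $\varphi(z)\in\R$, as claimed.

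First I would parametrize the contour $C$ in two pieces: the outgoing ray $w=re^{-i\pi/(n+1)}$ with $r$ running from $\infty$ to $0$, and the incoming ray $w=re^{i\pi/(n+1)}$ with $r$ running from $0$ to $\infty$. The key observation is that $C$ is symmetric under the map $w\mapsto\bar w$, but with reversed orientation, so the conjugated contour $\bar C$ equals $-C$. Plugging $z=x\in\R$ into \eqref{phi def} and using that $b,\alpha,\beta\in\R$, the exponent evaluated on the lower ray is the complex conjugate of the exponent on the upper ray. Writing the exponent on the upper ray as $A(r)+iB(r)$ with $A,B$ real, the two ray contributions combine into
\begin{equation*}
\int_C \exp\{-wx+b\alpha w^{k+1}+\beta w^{n+1}\}\,dw
= 2i\int_0^\infty e^{A(r)}\sin\!\Big(B(r)+\tfrac{\pi}{n+1}\Big)\,dr,
\end{equation*}
which is purely imaginary. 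Dividing by $2\pi i$ yields $\varphi(x)\in\R$.

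Alternatively and slightly more cleanly, I would prove the general Schwarz reflection identity $\overline{\varphi(\bar z)}=\varphi(z)$ for all $z\in\C$. Starting from \eqref{phi def}, conjugation moves inside the integral and produces $\exp\{-\bar w z+\overline{b\alpha}\,\bar w^{k+1}+\bar\beta\,\bar w^{n+1}\}$ integrated over the path $\{\overline{w(t)}\}$; since $b,\alpha,\beta$ are real this exponent becomes $\exp\{-sz+b\alpha s^{k+1}+\beta s^{n+1}\}$ after substituting $s=\bar w$, and since $\bar C=-C$ the sign from the reversed orientation exactly cancels the minus sign coming from the factor $\overline{1/(2\pi i)}=-1/(2\pi i)$. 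Restricting to $z=\bar z\in\R$ then immediately gives $\overline{\varphi(z)}=\varphi(z)$.

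There is no real obstacle in this argument; the only point requiring care is the sign bookkeeping under conjugation, namely ensuring that the orientation reversal of $\bar C$ and the conjugation of the prefactor $1/(2\pi i)$ combine correctly. This is a finite check. The direct parametrization route has the additional pleasant side benefit of producing the explicit real integral representation displayed above, which specializes at $z=0$ and $b=0$ to the formula \eqref{u(p)} used in the proof of Theorem~\ref{fj independent}.
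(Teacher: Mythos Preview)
Your proposal is correct. The first approach you outline --- parametrizing $C$ on its two rays and combining the conjugate exponents into a single real integral of the form $\frac{1}{\pi}\int_0^\infty e^{A(r)}\sin\!\big(B(r)+\tfrac{\pi}{n+1}\big)\,dr$ --- is exactly the route taken in the paper, which carries out the same calculation explicitly with $\eta=\pi/(n+1)$ and $\gamma=(k+1)\pi/(n+1)$ and arrives at $\varphi(x)=\frac{1}{\pi}\int_0^\infty e^{A}\sin(-B)\,dr$.

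Your second approach via the Schwarz reflection identity $\overline{\varphi(\bar z)}=\varphi(z)$ is a cleaner conceptual repackaging of the same computation: the symmetry $\bar C=-C$ together with the reality of $b,\alpha,\beta$ encodes globally what the paper verifies at the level of the two parametrized pieces, and the sign bookkeeping you flag (orientation reversal cancelling the conjugation of $1/(2\pi i)$) is indeed the only point to check. The reflection version has the minor advantage of yielding the identity for all $z\in\C$ at once, while the paper's explicit real-integral formula is what is actually reused elsewhere (for instance in \eqref{u(p)} during the proof of Theorem~\ref{fj independent}), so each presentation buys something.
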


\begin{proof} Let $\varphi(z)$ be given by \eqref{phi def}, where $b \in \R$. For convenience, set
$$\gamma = \frac{k+1}{n+1}\pi \quad\quad \hbox{and} \quad\quad \eta = \frac{\pi}{n+1}.$$
Let $x\in\R$. From \eqref{phi def},
    \begin{equation*}
    \begin{split}
    \varphi(x) = & \frac{1}{2\pi i}\int_C \exp\left(-wx+b\alpha w^{k+1}+\beta w^{n+1}\right)\,dw\\
    = &\frac{1}{2\pi i}\int_\infty^0
    \exp\left(-rxe^{-\eta i}+b\alpha r^{k+1}e^{-\gamma i}-\beta r^{n+1}-\eta i\right)\, dr\\
       &+\frac{1}{2\pi i}\int_0^\infty
    \exp\left(-rxe^{\eta i}+b\alpha r^{k+1}e^{\gamma i}-\beta r^{n+1}+\eta i\right)\, dr\\
		= &-\frac{1}{2\pi i}\int_0^\infty \exp\left(A+iB\right)\, dr
     +\frac{1}{2\pi i}\int^\infty_0\exp\left(A-iB\right)\, dr,
    \end{split}
    \end{equation*}
where
    \begin{eqnarray*}
    A&=&-rx\cos\eta+b\alpha r^{k+1}\cos\gamma-\beta r^{n+1},\\
    B&=&rx\sin\eta-b\alpha r^{k+1}\sin\gamma-\eta.
    \end{eqnarray*}
Therefore, we obtain
$$\varphi(x) =\frac{1}{\pi}\int_0^\infty \exp(A)\sin(-B)\, dr,$$
which is real. This proves the assertion. \end{proof}

\section{Another family of contour integrals} \label{psi family}

Consider a linear differential equation of the form
\begin{equation} \label{psi eq}
f^{(n)} - zf^{(k)} - f = 0, \quad \quad 1 < k < n,
\end{equation}
and let $\psi = \psi(n, k)$ denote the contour integral function
\begin{equation} \label{psi def}
\psi(z) =  \frac{1}{2\pi i} \int_{|w| = 1} w^{k-2} \exp\left\{\frac{z}{w}- \frac{1}{(n-k+1)w^{n-k+1}} - \frac{w^{k-1}}{k-1} \right\} dw.
\end{equation}

\begin{theorem} \label{psi exist}
The function $f = \psi(z)$ is a transcendental solution of \eqref{psi eq}.
\end{theorem}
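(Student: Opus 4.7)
The plan is to verify $\psi$ satisfies \eqref{psi eq} by an exact-derivative argument under the integral, and then to argue transcendence in two short steps: rule out polynomial solutions by a degree count, and show $\psi \not\equiv 0$ by producing a specific nonzero Taylor coefficient at the origin.

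Set $E(z,w) = \exp\bigl\{z/w - 1/((n-k+1)w^{n-k+1}) - w^{k-1}/(k-1)\bigr\}$. Differentiation under the integral yields $\psi^{(j)}(z) = \frac{1}{2\pi i}\int_{|w|=1} w^{k-2-j} E(z,w)\,dw$ for every $j \geq 0$, so $\psi^{(n)}(z) - z\psi^{(k)}(z) - \psi(z)$ equals the closed-contour integral of $(w^{k-2-n} - zw^{-2} - w^{k-2})E(z,w)$ over $|w| = 1$. A direct calculation gives $\partial_w E = E \cdot (w^{k-2-n} - zw^{-2} - w^{k-2})$, so the integrand is precisely $\partial_w E$; since $n-k+1$ and $k-1$ are positive integers when $1 < k < n$, the exponent of $E$ is single-valued on $\C \setminus \{0\}$, hence so is $E$, and the closed-contour integral of $\partial_w E$ vanishes. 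This parallels the exact-derivative step in the proof of Theorem~\ref{phi exist}, and is simpler here because the contour is already closed.

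Next, equation \eqref{psi eq} admits no nonzero polynomial solution: if $f$ had degree $m \geq 0$, then $\deg f^{(n)} \leq m-n$ and $\deg(zf^{(k)}) \leq m-k+1$, both strictly less than $m$ because $k \geq 2$ and $n > k$, so the $z^m$ coefficient of $f^{(n)} - zf^{(k)} - f$ equals the negative of the leading coefficient of $f$ and must vanish. Therefore transcendence of $\psi$ reduces to showing $\psi \not\equiv 0$. For this, set $A(w) = \exp\{-1/((n-k+1)w^{n-k+1})\}$ and $B(w) = \exp\{-w^{k-1}/(k-1)\}$, so that $\psi^{(k-1)}(0) = \frac{1}{2\pi i}\int_{|w|=1} w^{-1} A(w)B(w)\,dw$ equals the coefficient of $w^0$ in the Laurent expansion of $A(w)B(w)$. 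Expanding the Cauchy product, that coefficient is a sum indexed by non-negative integer solutions of $l(k-1) = j(n-k+1)$, parameterized by $(l,j) = (qt, pt)$ for $t \geq 0$, where $d = \gcd(k-1, n-k+1)$, $p = (k-1)/d$, $q = (n-k+1)/d$. The $t = 0$ term equals $1$, and a routine ratio/factorial estimate (using $n-k+1 \geq 2$, $k-1 \geq 1$, $p,q \geq 1$, and $(pt)!(qt)! \geq (t!)^2$) gives $|T_t| \leq 2^{-t}(t!)^{-2}$ for $t \geq 1$, so the tail $\sum_{t\geq 1}|T_t|$ is strictly less than $1$. Hence the coefficient of $w^0$ in $A(w)B(w)$ is nonzero, giving $\psi \not\equiv 0$ and therefore the transcendence of $\psi$.

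The main obstacle is the non-vanishing step. The ODE check and the polynomial exclusion are essentially formal, but showing $\psi \not\equiv 0$ requires extracting one concrete residue and analyzing the resulting infinite series over the Diophantine solutions of $l(k-1) = j(n-k+1)$; the chief difficulty is to rule out accidental cancellation of the $t=0$ contribution by the tail, especially in the alternating-sign case $p+q$ odd. A Bessel-style bound on the tail suffices for this.
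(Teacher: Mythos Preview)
Your argument is correct, and its overall shape matches the paper's: verify the ODE by recognizing the integrand as $\partial_w E$ on a closed contour, rule out polynomial solutions by a degree count, and establish $\psi \not\equiv 0$ by showing $\psi^{(k-1)}(0) \neq 0$.

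The genuine difference is in how non-vanishing of $\psi^{(k-1)}(0)$ is proved. The paper parametrizes $|w|=1$ by $w = e^{i\theta}$, writes the integrand as $e^{P(\theta)}\bigl(\cos Q(\theta) + i\sin Q(\theta)\bigr)$, and observes from $1 < k < n$ that
\[
|Q(\theta)| \le \frac{1}{n-k+1} + \frac{1}{k-1} \le \frac{1}{2} + 1 < \frac{\pi}{2},
\]
so $\cos Q(\theta) > 0$ for all $\theta$ and hence $\mathrm{Re}\,\psi^{(k-1)}(0) > 0$. You instead read $\psi^{(k-1)}(0)$ as the constant Laurent coefficient of $A(w)B(w)$, reduce to a sum over the Diophantine solutions of $l(k-1) = j(n-k+1)$, and bound the tail by $\sum_{t\ge 1} 2^{-t}(t!)^{-2} < 1$ to prevent cancellation of the $t=0$ term. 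Both routes exploit the same arithmetic facts ($n-k+1 \ge 2$, $k-1 \ge 1$); the paper's trigonometric positivity argument is a one-line estimate that sidesteps any series manipulation, while your residue/tail-bound approach is more algebraic and makes the Taylor structure of $\psi$ at $0$ explicit, which could be handy if one later wants quantitative information about the coefficients.
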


\begin{proof} From \eqref{psi def}, we obtain
\begin{equation*}
\begin{split}
&(2 \pi i)\{\psi^{(n)}(z) - z \psi^{(k)}(z) - \psi(z)\}\\
=&\int_{|w| = 1} \left(\frac{1}{w^{n-k+2}} - \frac{z}{w^2} - w^{k-2}\right) \exp\left\{\frac{z}{w}- \frac{1}{(n-k+1)w^{n-k+1}} - \frac{w^{k-1}}{k-1} \right\} dw\\
=&\int_{|w| = 1} \frac{\partial}{\partial w} \exp\left\{\frac{z}{w}- \frac{1}{(n-k+1)w^{n-k+1}} - \frac{w^{k-1}}{k-1} \right\} dw\\
=&\left[\exp\left\{\frac{z}{w}- \frac{1}{(n-k+1)w^{n-k+1}} - \frac{w^{k-1}}{k-1}  \right\}\right]_{|w| = 1} = 0,
\end{split}
\end{equation*}
since $|w| = 1$ is a closed curve. Thus, $\psi(z)$ satisfies \eqref{psi eq}.

Next we show that $\psi \not\equiv 0$. From \eqref{psi def} and $w = e^{i\theta}$, we obtain
\begin{equation} \label{k-1 & 0}
\psi^{(k-1)}(0) = \frac{1}{2\pi}\int_{-\pi}^{\pi} \exp\left\{-e^{i(k-1)\theta} \left(\frac{e^{-in\theta}}{n-k+1} + \frac{1}{k-1}\right) \right\} d\theta.
\end{equation}
Let $L = L(n, k, \theta)$ denote
\begin{equation} \label{L def}
L = -e^{i(k-1)\theta} \{\sigma e^{-in\theta} + \eta\},
\end{equation}
where 
$$\sigma = \frac{1}{n-k+1}  \quad \hbox{and} \quad \eta = \frac{1}{k-1}.$$ 
From \eqref{L def}, we have
\begin{equation} \label{gen L}
L = P(n, k, \theta) + i Q(n, k, \theta),
\end{equation}
where $P(n, k, \theta)$ and $Q(n, k, \theta)$ are the real-valued functions
$$P(n, k, \theta) = -\sigma \cos((n - k + 1)\theta) - \eta \cos((k-1)\theta),$$
$$Q(n, k, \theta) = \sigma \sin((n - k + 1)\theta) - \eta \sin((k-1)\theta).$$
From \eqref{k-1 & 0}, \eqref{L def}, \eqref{gen L},
\begin{equation} \label{PQ}
\psi^{(k-1)}(0) = \frac{1}{2\pi} \int_{-\pi}^{\pi} e^{P(n, k, \theta)} \{\cos Q(n, k, \theta) + i \sin Q(n, k, \theta) \} d\theta.
\end{equation}
Taking real parts of \eqref{PQ} gives
\begin{equation} \label{Re}
\text{Re}\left(\psi^{(k-1)}(0)\right) = \frac{1}{2\pi} \int_{-\pi}^{\pi} e^{P(n, k, \theta)} \cos Q(n, k, \theta) d\theta.
\end{equation}
Since $1 < k < n$, we obtain for all $\theta$,
\begin{eqnarray*}
|Q(n, k, \theta)| &\leq& \frac{|\sin ((n-k+1)\theta)|}{n-k+1} + \frac{|\sin(k - 1)\theta|}{k-1}\\
&\leq& \frac{1}{n-k+1} + \frac{1}{k-1} \leq \frac{1}{2} + 1 < \frac{\pi}{2}.
\end{eqnarray*}
Hence, $\cos Q(n, k, \theta) \geq \cos(3/2) > 0$ for all $\theta$. Then from \eqref{Re}, it follows that $\text{Re}\left(\psi^{(k-1)}(0)\right) > 0$. Thus, $\psi \not\equiv 0$. Furthermore, $\psi$ must be transcendental because it can be observed that \eqref{psi eq} cannot possess a nontrivial polynomial solution. This proves Theorem~\ref{psi exist}. \end{proof}

\begin{remark} Observe that Example~\ref{GSW ex 1} is a particular case of Theorem~\ref{psi exist} because when $n = 3$ and $k = 2$ in \eqref{psi def}, we obtain $2 \pi i \psi(z) \equiv G(z)$, where $G(z)$ is the function in \eqref{G def}. Example~\ref{GSW ex 7} is related to the particular case of Theorem~\ref{psi exist} when $n = 4$ and $k = 3$, see Section~\ref{specific example} below.\end{remark}   

The following result shows that the order of $\psi$ lies in $(0, 1)$.

\begin{theorem} \label{psi order}
The order of $\psi$ in \eqref{psi def} satisfies $\rho(\psi) = 1 - 1/k$.
\end{theorem}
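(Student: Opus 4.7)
My plan is to bound $\rho(\psi)$ from above directly from the contour representation \eqref{psi def}, and then to pin down the exact value by invoking \cite[Theorem 1]{GSW} (the same tool used for Theorem~\ref{phi order}).

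\textbf{Upper bound.} First I observe that the integrand in \eqref{psi def} is holomorphic on $\C\setminus\{0\}$, so by Cauchy's theorem I may replace the contour $|w|=1$ by the circle $|w|=R$ for any $R>0$ without changing $\psi(z)$. Parameterising $w=Re^{i\theta}$ and using $|e^{g(w)}|=e^{\mathrm{Re}\,g(w)}$ together with the triangle inequality for the exponent, I obtain
\begin{equation*}
|\psi(z)|\leq R^{k-1}\exp\!\left(\frac{|z|}{R}+\frac{1}{(n-k+1)R^{n-k+1}}+\frac{R^{k-1}}{k-1}\right).
\end{equation*}
The two dominant terms $|z|/R$ and $R^{k-1}/(k-1)$ balance when $R^k\asymp|z|$, so for $|z|$ large I choose $R=|z|^{1/k}$. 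The middle exponent then tends to $0$, while the remaining two both become $O(|z|^{1-1/k})$, giving
\begin{equation*}
|\psi(z)|\leq |z|^{(k-1)/k}\exp\!\left(\tfrac{k}{k-1}\,|z|^{1-1/k}+o(1)\right),
\end{equation*}
which immediately yields $\rho(\psi)\leq 1-1/k$.

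\textbf{Lower bound.} Theorem~\ref{psi exist} already tells me that $\psi$ is a transcendental solution of \eqref{psi eq} (which admits no nontrivial polynomial solution because the coefficient of $f$ is $-1$). My plan is to combine this with \cite[Theorem 1]{GSW} in the same way as in the proof of Theorem~\ref{phi order}: that theorem restricts $\rho(\psi)$ to a finite list of rational numbers coming from the dominant balances of \eqref{psi eq}. Substituting a formal ansatz $f(z)\sim\exp(cz^\rho)$ into \eqref{psi eq} and equating the $n$-th derivative with $zf^{(k)}$, then $f$, and finally balancing $zf^{(k)}$ against $f$, produces the three candidates $\rho=1+1/(n-k)$, $\rho=1$, $\rho=1-1/k$. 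Since the upper bound already excludes the first two, the value $\rho(\psi)=1-1/k$ is forced.

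\textbf{Main obstacle.} The only delicate point is justifying the reduction to exactly those three candidates via \cite[Theorem 1]{GSW}; this is the same subtlety that already underlies the proof of Theorem~\ref{phi order} and is not worked out explicitly there either. Should a more self-contained lower bound be preferable, I would instead perform a saddle-point analysis of \eqref{psi def}: the saddles of the exponent $g(w)=z/w-1/((n-k+1)w^{n-k+1})-w^{k-1}/(k-1)$ satisfy $w^k\approx -z$ for large $|z|$, and $\mathrm{Re}\,g(w_s)$ is of order $|z|^{1-1/k}$ and positive for a suitable $\arg z$. Deforming $|w|=1$ through such a saddle and applying the standard descent estimate would yield $|\psi(z)|\geq c\exp(K|z|^{1-1/k})$ along that direction, matching the upper bound directly; the technical care here is in verifying that the deformation is admissible and that the non-saddle portions of the contour contribute subdominantly.
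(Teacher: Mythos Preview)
Your proposal is correct and matches the paper's proof almost exactly: deform the contour to $|w|=|z|^{1/k}$ to obtain the upper bound $\rho(\psi)\le 1-1/k$, then combine Theorem~\ref{psi exist} with \cite[Theorem~1]{GSW} to force equality. One small note: the paper's invocation of \cite[Theorem~1]{GSW} yields only the two candidates $1+1/(n-k)$ and $1-1/k$ (your extra candidate $\rho=1$ does not actually appear there), but since your upper bound already rules it out this is harmless.
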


\begin{proof} From \eqref{psi def} and Cauchy's theorem, it follows that
\begin{equation*}
\psi(z) =  \frac{1}{2\pi i} \int_{|w| = R} w^{k-2} \exp\left\{\frac{z}{w}- \frac{1}{(n-k+1)w^{n-k+1}} - \frac{w^{k-1}}{k-1} \right\} dw
\end{equation*}
holds for any $R > 0$. Since $1 < k < n$, we have
\begin{equation} \label{psi abs}
|\psi(z)| \leq \frac{1}{2\pi} \int_{|w| = R} |w|^{k-2} \exp\left\{\frac{|z|}{|w|} + \frac{1}{|w|^{n-k+1}} + |w|^{k-1} \right\} |dw|.
\end{equation}
Let $z$ be a point satisfying $|z| \geq 1$ and set $R = |z|^{1/k}$. Then from \eqref{psi abs},
\begin{equation*}
|\psi(z)| \leq |z|^{1-1/k} \exp\left\{2|z|^{1-1/k} + 1 \right\},
\end{equation*}
which gives $\rho(\psi) \leq 1 - 1/k$. Since $\psi$ is transcendental from Theorem~\ref{psi exist}, it follows from \cite[Theorem 1]{GSW} that the only possible orders of $\psi$ are $1 + 1/(n - k)$ and $1 - 1/k$. Since $n > k$ and $\rho(\psi) \leq 1 - 1/k$, we obtain $\rho(\psi) = 1 - 1/k$.\end{proof}

The proofs of the next two results use the following observation. If $s$ is a non-negative integer, then from \eqref{psi def},
$$\psi^{(s)}(0) = \frac{1}{2\pi i} \int_{|w| = 1} w^{k-2-s} \exp\left\{- \frac{1}{(n-k+1)w^{n-k+1}} - \frac{w^{k-1}}{k-1}  \right\} dw.$$
Thus, from the residue theorem, we obtain
\begin{equation} \label{Res eq}	
\psi^{(s)}(0) = \text{Res} \left\{w^{k-2-s}\sum_{j=0}^\infty \frac{(-1)^j w^{j(k-1)}}{j!(k-1)^j} \left(\frac{k-1}{(n-k+1)w^n}+ 1\right)^j, \; 0 \right\}.
\end{equation}

\begin{theorem} \label{psi even unbounded}
When $n$ is even and $k$ is odd, then $\psi$ is an even function that is unbounded on every ray from the origin.
\end{theorem}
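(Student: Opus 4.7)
The plan begins by proving evenness directly from the contour integral: substitute $w = -u$ in \eqref{psi def} for $\psi(-z)$. Under the parity hypothesis ($n$ even, $k$ odd), the exponents $k-1$ and $n-k+1$ are both even, so the terms $w^{k-1}/(k-1)$ and $1/[(n-k+1)w^{n-k+1}]$ in the integrand are invariant under $w \mapsto -u$, while the term $-z/w$ flips to $z/u$, which is exactly what appears in the integrand of $\psi(z)$. The prefactor $w^{k-2}$ contributes $(-1)^{k-2} = -1$ (as $k-2$ is odd) and $dw = -du$ contributes another sign, so the two cancel. Since $w \mapsto -w$ carries the counterclockwise unit circle to itself with orientation preserved, the contour is unchanged, and the substitution yields $\psi(-z) = \psi(z)$.

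The key observation for unboundedness is that evenness lowers the effective order below the critical threshold $1/2$. Writing $\psi(z) = g(z^2)$ with $g$ entire, the identity $M(r, \psi) = M(r^2, g)$ (obtained from $\{z^2 : |z| = r\} = \{|\zeta| = r^2\}$) gives $\rho(g) = \rho(\psi)/2 = (1 - 1/k)/2$ via Theorem~\ref{psi order}. Because $k$ is odd and strictly greater than $1$, we have $k \geq 3$, hence $\rho(g) \leq 1/3 < 1/2$. The function $g$ is transcendental, since otherwise $\psi(z) = g(z^2)$ would be a polynomial, contradicting Theorem~\ref{psi exist}.

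The remaining step is to invoke a classical minimum modulus result. Wiman's theorem (the $\cos \pi \rho$ theorem in the regime $\rho < 1/2$) asserts that any transcendental entire function $g$ of order less than $1/2$ satisfies $\min_{|\zeta| = R_j} |g(\zeta)| \to \infty$ for some sequence $R_j \to \infty$. Given any ray $\arg z = \theta$, set $r_j = \sqrt{R_j}$; then
\begin{equation*}
|\psi(r_j e^{i\theta})| = |g(R_j e^{2i\theta})| \geq \min_{|\zeta|=R_j} |g(\zeta)| \to \infty,
\end{equation*}
so $\psi$ is unbounded on the ray. The only subtle point is recognizing that evenness halves the order and places it below $1/2$ (which succeeds precisely because $k \geq 3$); once that is noted, the classical theory does all the work, and the main obstacle reduces to careful bookkeeping of signs in the evenness substitution.
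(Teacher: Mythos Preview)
Your proof is correct, but both halves take a different route from the paper.

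For evenness, the paper works with the Maclaurin coefficients via the residue formula \eqref{Res eq}: when $n$ is even and $k$ is odd, all powers of $w$ appearing are even, so the residue giving $\psi^{(s)}(0)$ vanishes for every odd $s$. Your substitution $w\mapsto -u$ in the integral is equally valid and arguably more direct; it avoids the residue bookkeeping entirely.

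For unboundedness, the paper argues by contradiction with Phragm\'en--Lindel\"of: if $\psi$ were bounded on some ray $\arg z=\theta_0$, evenness would make it bounded on the opposite ray as well, and since $\rho(\psi)<1$ the two half-planes of opening $\pi$ fall under Phragm\'en--Lindel\"of, forcing $\psi$ to be a constant. Your approach---writing $\psi(z)=g(z^2)$, noting $\rho(g)=\rho(\psi)/2<1/2$, and invoking Wiman's minimum-modulus theorem---is a heavier tool but yields a stronger conclusion: it produces a single sequence $r_j\to\infty$ along which $|\psi(r_je^{i\theta})|\to\infty$ \emph{uniformly} in $\theta$, not merely unboundedness ray by ray. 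One small remark: your emphasis on $k\ge 3$ is correct under the parity hypothesis but not actually needed for the order bound, since $\rho(g)=(1-1/k)/2<1/2$ holds for every $k>1$. The paper's argument likewise only uses $\rho(\psi)<1$, so neither proof exploits the specific value $k\ge 3$ beyond what Theorem~\ref{psi order} already gives.
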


\begin{proof}
Since $n$ even and $k$ is odd, for any odd positive integer $s$ and any non-negative integer $j$, the two integers $k - 2 - s$ and $j(n - k + 1)$ are both even integers. It follows that all the powers of $w$ in \eqref{Res eq} are even powers, which means that the residue must be zero. Therefore, from \eqref{Res eq}, $\psi^{(s)}(0) = 0$ for all odd integers $s$. Hence, from the Maclaurin series, $\psi(z)$ is an even function.

Now suppose that $\psi(z)$ is bounded on one particular ray $\arg z = \theta_0$. Since $\psi(z)$ is an even function, it follows that $\psi(z)$ is also bounded on the ray $\arg z = \theta_0 + \pi$. Since $\psi$ is bounded on these two rays where $(\theta_0 + \pi) - \theta_0 = \pi$, and since $\rho(\psi) \in (0, 1)$ from Theorem~\ref{psi order}, it follows from the Phragm\'en-Lindel\"of theorem that $\psi$ must be bounded in the whole complex plane. Hence, from Liouville's theorem, $\psi$ is a constant, which is a contradiction. Thus our assumption is false, and $\psi$ must be unbounded on every ray from the origin. \end{proof}

\begin{theorem} \label{psi real}
The function $\psi$ is real on the real axis. If $n$ is even and $k$ is odd, then $\psi$ is also real on the imaginary axis.
\end{theorem}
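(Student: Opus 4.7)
The plan is to mimic the proof of Theorem~\ref{phi real}: parametrize the unit circle by $w=e^{i\theta}$, $\theta\in[-\pi,\pi]$, and exploit the reflection symmetry $\theta\mapsto-\theta$.

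Concretely, I would first substitute $w=e^{i\theta}$ (so $dw=ie^{i\theta}\,d\theta$ and $w^{k-2}\,dw=ie^{i(k-1)\theta}\,d\theta$) in \eqref{psi def} to write, for $x\in\R$,
$$\psi(x)=\frac{1}{2\pi}\int_{-\pi}^{\pi} e^{i(k-1)\theta}\exp\bigg\{xe^{-i\theta}-\frac{e^{-i(n-k+1)\theta}}{n-k+1}-\frac{e^{i(k-1)\theta}}{k-1}\bigg\}\,d\theta.$$
Call the right-hand side $J(x)$. Performing the change of variable $\theta\mapsto-\theta$ (which preserves the interval $[-\pi,\pi]$ after the sign change is absorbed by reversing the orientation) yields
$$J(x)=\frac{1}{2\pi}\int_{-\pi}^{\pi} e^{-i(k-1)\theta}\exp\bigg\{xe^{i\theta}-\frac{e^{i(n-k+1)\theta}}{n-k+1}-\frac{e^{-i(k-1)\theta}}{k-1}\bigg\}\,d\theta.$$
Because $x$, $1/(n-k+1)$ and $1/(k-1)$ are all real, the right-hand side is exactly $\overline{J(x)}$; hence $\psi(x)=\overline{\psi(x)}\in\R$.

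For the imaginary-axis statement under the hypothesis that $n$ is even and $k$ is odd, I would combine two facts already in hand. By Theorem~\ref{psi even unbounded}, $\psi$ is even, so its Maclaurin expansion has the form $\psi(z)=\sum_{j\geq 0}c_{2j}z^{2j}$. Since $\psi$ is entire and real on the real axis by the previous paragraph, every Taylor coefficient $c_{2j}$ at $0$ is real. Setting $z=iy$ with $y\in\R$ then gives
$$\psi(iy)=\sum_{j\geq 0}c_{2j}(-1)^j y^{2j}\in\R,$$
as required. I do not anticipate any real obstacle: the only item that needs care is verifying that $\theta\mapsto-\theta$ genuinely sends the integrand to its complex conjugate, which rests solely on the reality of the exponent $k-2$ in the factor $w^{k-2}$ and of the constants $x,\;1/(n-k+1),\;1/(k-1)$.
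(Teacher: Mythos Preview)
Your argument is correct. For the real-axis claim you take a different route from the paper: the paper observes from the residue identity \eqref{Res eq} that every $\psi^{(s)}(0)$ is real (the expansion there has only real coefficients), so the Maclaurin series has real coefficients and $\psi$ is real on $\R$. You instead parametrize $|w|=1$ and use the substitution $\theta\mapsto-\theta$ to show $\psi(x)=\overline{\psi(x)}$ directly, exactly in the spirit of the paper's proof of Theorem~\ref{phi real} for the $\varphi$ family. Your method is self-contained and avoids appealing to \eqref{Res eq}; the paper's method, on the other hand, gets the result for free from machinery already set up for Theorem~\ref{psi even unbounded}. For the imaginary-axis claim your argument and the paper's coincide: both combine evenness from Theorem~\ref{psi even unbounded} with reality on $\R$ (the paper phrases the last step as ``the reflection principle'', which is exactly your observation that the even Taylor coefficients are real so $\psi(iy)\in\R$).
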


\begin{proof}
From \eqref{Res eq}, $\psi^{(s)}(0)$ is a real number for every non-negative integer $s$. Thus, from the Maclaurin series, $\psi$ is real on the real axis.

When $n$ is even and $k$ is odd, then $\psi(z)$ is an even function from Theorem~\ref{psi even unbounded}. Then from the reflection principle, it follows that $\psi$ is real on the imaginary axis.\end{proof}

\section{Solutions of $f^{(4)} - zf''' - f = 0$}  \label{specific example}

In this section we discuss contour integral functions that are solutions of the particular differential equation
\begin{equation} \label{U equa}
f^{(4)} - zf^{\prime\prime\prime} - f = 0.
\end{equation}
One such solution is when $n = 4$ and $k = 3$ in \eqref{psi def}, and for convenience, we denote this function by $U(z)$. Then from Theorem~\ref{psi exist},
\begin{equation} \label{U def}
U(z) = \frac{1}{2 \pi i} \int_{|w| = 1} w \exp\left\{\frac{z}{w}- \frac{1}{2w^2} - \frac{w^2}{2} \right\} dw
\end{equation}
satisfies equation \eqref{U equa}. Another contour integral function that satisfies \eqref{U equa} is $H(z)$ in \eqref{H def} from Example~\ref{GSW ex 7}.

First, properties of $U(z)$ will be discussed. Next we derive an alternate form for $H(z)$ that is more convenient than \eqref{H def}. This alternate form is then used to show that $U(z)$, $H(z)$, $H(-z)$ are three pairwise linearly independent solutions of \eqref{U equa} that satisfy a specific identity, see Theorem~\ref{H and U}. 

From Theorems~\ref{psi order}, \ref{psi even unbounded} and \ref{psi real}, $U(z)$ has order 2/3, is an even function, is unbounded on every ray from the origin, and is real on both the real and imaginary axis. The next result gives further properties of $U(z)$ on the imaginary axis. 

\begin{theorem} \label{U on rays} The following statements hold for the function $U(z)$ in \eqref{U def}.\\

\textnormal{(a)} $U(z)$ has its largest growth in both directions of the imaginary axis, and we have 
\begin{equation}\label{M(r, U)}
|U(ir)| = |U(-ir)| = M(r, U) = (c_0 + o(1))r^{2/3}
\end{equation}
as $r \to \infty$, where $c_0 > 0$ is some constant.\\ 

\textnormal{(b)} $U(z)$ does not have any zeros on the imaginary axis.

\end{theorem}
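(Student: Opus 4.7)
The plan has two essentially independent parts. For part (a) I will apply the saddle-point method to the contour integral in \eqref{U def}. By Cauchy's theorem the circle $|w|=1$ can be replaced by $|w|=R$ for any $R>0$; for $z=re^{i\phi}$ I would choose $R=r^{1/3}$ and substitute $w=r^{1/3}v$. The exponent in the integrand then becomes $r^{2/3}h_\phi(v)+O(r^{-2/3})$, where $h_\phi(v)=e^{i\phi}/v-v^2/2$. The saddle equation $h_\phi'(v)=0$ gives $v^3=-e^{i\phi}$, producing three saddles $v_j=e^{i[(\phi+\pi)/3+2\pi j/3]}$ with saddle values $h_\phi(v_j)=-(3/2)v_j^2$. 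Consequently $\operatorname{Re} h_\phi(v_j)=-(3/2)\cos(2(\phi+\pi)/3+4\pi j/3)$; maximizing over the three equally spaced angles shows that the best value equals $3/2$ precisely when one of these angles is congruent to $\pi$ modulo $2\pi$, and this forces $\phi\equiv\pi/2\pmod{\pi}$. Thus the dominant-saddle exponent attains the maximal value $(3/2)r^{2/3}$ on the imaginary axis and is strictly smaller on every other ray.

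For $z=ir$ the dominant saddle is $v_0=i$, where $h_\phi''(v_0)=-3<0$, so the steepest-descent direction is the real $v$-axis and the circle $|v|=1$ is tangent to it at $v_0$. Deforming the contour to follow steepest descent while still encircling the origin (which is harmless because the integrand is holomorphic on $\C\setminus\{0\}$), the Gaussian saddle-point formula yields
\[
U(ir) = -\frac{r^{1/3}}{\sqrt{6\pi}}\exp\!\left\{\tfrac{3}{2}r^{2/3}\right\}(1+o(1)),
\]
so $\log|U(ir)|=(\tfrac{3}{2}+o(1))r^{2/3}$. Running the same saddle analysis on every ray and combining with the comparison above identifies $\log M(r,U)$ with the maximum over $\phi$ of the dominant-saddle exponent; this establishes \eqref{M(r, U)} (with $c_0=3/2$) and shows that the maximum modulus is asymptotically attained at $\pm ir$.

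For part (b) I would obtain an explicit Maclaurin expansion of $U(z)$ via residues. Expanding $e^{z/w}$ as a Taylor series in $z/w$ and multiplying by the Laurent expansion of $e^{-1/(2w^2)-w^2/2}$, the residue at $w=0$ of the integrand in \eqref{U def} collects into a power series in $z$; recognising the inner sum as a modified Bessel series gives
\[
U(z) = \sum_{m=0}^\infty (-1)^{m-1}\, I_{|m-1|}(1)\,\frac{z^{2m}}{(2m)!},
\]
where $I_\nu$ denotes the modified Bessel function of the first kind. Substituting $z=it$ produces an extra factor $(-1)^m$ which exactly cancels the alternating sign, leaving
\[
U(it) = -\sum_{m=0}^{\infty} \frac{I_{|m-1|}(1)}{(2m)!}\,t^{2m}.
\]
Since $I_\nu(1)>0$ for every $\nu\ge 0$ and the $m=0$ term already contributes $-I_1(1)<0$, every term is strictly negative, so $U(it)<0$ for all $t\in\R$, and therefore $U$ has no zeros on the imaginary axis.

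The main obstacle will be the rigorous implementation of the saddle-point asymptotic in part (a): constructing a contour that winds once around the essential singularity at $w=0$ while passing through the dominant saddle in the steepest-descent direction, and verifying that the two subdominant saddles contribute only $O(\exp\{-(3/4)r^{2/3}\})$, which is absorbed into the $(1+o(1))$ factor. Part (b) is then essentially algebraic, with the crucial point being the sign coincidence between the alternating signs of the Maclaurin coefficients of $U$ and those of $z^{2m}=(-1)^m t^{2m}$.
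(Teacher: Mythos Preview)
Your part (b) is correct and is essentially the paper's argument: the paper's Lemma~\ref{Mac coeff} computes the Maclaurin coefficients $a_{2\nu}$ explicitly and observes that $(-1)^{\nu}a_{2\nu}<0$ for all $\nu\ge 0$; your Bessel-function formula $a_{2\nu}=(-1)^{\nu-1}I_{|\nu-1|}(1)/(2\nu)!$ is the same thing in closed form. Substituting $z=it$ then gives $U(it)=-\sum_{\nu}|a_{2\nu}|t^{2\nu}<0$, proving (b).

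For part (a), however, you take a much harder road than necessary. The paper simply reuses the Maclaurin computation: once you know $U(it)=-\sum_{\nu}|a_{2\nu}|t^{2\nu}$, the triangle inequality applied termwise gives, for any $z$ with $|z|=r$,
\[
|U(z)|\ \le\ \sum_{\nu\ge 0}|a_{2\nu}|\,r^{2\nu}\ =\ |U(ir)|,
\]
so $|U(ir)|=M(r,U)$ \emph{exactly}, for every $r$, and $|U(-ir)|=|U(ir)|$ since $U$ is even. The final asymptotic $\log M(r,U)=(c_0+o(1))r^{2/3}$ is then just Valiron's formula \eqref{Valiron} together with $\rho(U)=2/3$. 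No saddle-point analysis is needed.

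Your saddle-point outline is plausible and would, if carried out rigorously (with uniform-in-$\phi$ error bounds and a steepest-descent contour that still encircles the essential singularity at $w=0$), establish the asymptotic part of \eqref{M(r, U)} and even pin down $c_0=3/2$; the paper obtains the same value from the coefficient asymptotics in the remark following the proof. But note a gap relative to the stated conclusion: comparing dominant-saddle exponents over rays only yields $\log|U(ir)|\sim\log M(r,U)$ asymptotically, whereas the theorem asserts the \emph{exact} equality $|U(ir)|=M(r,U)$. That exact equality comes for free from your own Maclaurin computation in part (b), so the saddle-point machinery is ultimately redundant here.
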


We use the following lemma in our proof of Theorem~\ref{U on rays}.

\begin{lemma} \label{Mac coeff}
The Maclaurin series of the function $U(z)$ in \eqref{U def} is given by
\begin{equation} \label{ms}
U(z) = \sum_{\nu=0}^{\infty} a_{2\nu}z^{2\nu},
\end{equation}
where
$$a_0 = -\frac{1}{2} \sum_{m=0}^{\infty} \frac{1}{4^m m!(m+1)!}$$
and
$$a_{2\nu} = \frac{(-1)^{\nu+1}}{2^{\nu-1} (2\nu)!} \sum_{m=0}^{\infty} \frac{1}{4^m m!(m+\nu-1)!}, \quad \quad  \nu \geq 1.$$
\end{lemma}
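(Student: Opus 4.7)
The plan is to extract the Maclaurin coefficients directly from the contour integral representation. Since the contour $|w|=1$ is compact and the integrand is continuous in $(z,w)$ with $\exp(z/w)$ analytic in $z$, I would expand
\begin{equation*}
\exp(z/w) = \sum_{s=0}^{\infty} \frac{z^{s}}{s!\, w^{s}}
\end{equation*}
and interchange summation and integration to obtain
\begin{equation*}
U(z) = \sum_{s=0}^{\infty} \frac{z^{s}}{s!} \cdot \frac{1}{2\pi i}\int_{|w|=1} w^{1-s}\exp\!\left\{-\frac{1}{2w^{2}} - \frac{w^{2}}{2}\right\} dw.
\end{equation*}
Hence $a_{s}$ equals $1/s!$ times the residue at $w=0$ of $w^{1-s}\exp\{-1/(2w^{2}) - w^{2}/2\}$.

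Next, I would compute this residue by multiplying the two standard power series
\begin{equation*}
\exp\!\left(-\frac{w^{2}}{2}\right) = \sum_{j=0}^{\infty}\frac{(-1)^{j}}{2^{j}j!}\,w^{2j}, \qquad \exp\!\left(-\frac{1}{2w^{2}}\right) = \sum_{m=0}^{\infty}\frac{(-1)^{m}}{2^{m}m!}\,w^{-2m}.
\end{equation*}
The general term in $w^{1-s}\exp\{-1/(2w^{2}) - w^{2}/2\}$ is thus proportional to $w^{1-s+2j-2m}$, so the residue (coefficient of $w^{-1}$) is obtained exactly when $2j-2m = s-2$. This parity constraint already forces the residue to vanish whenever $s$ is odd, recovering the fact that $U$ is even and making the even-only indexing $s = 2\nu$ natural.

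For $s = 2\nu$ with $\nu \geq 1$, the condition $j - m = \nu-1$ gives $j = m + \nu - 1 \geq 0$ for every $m \geq 0$, and summing over $m$ produces
\begin{equation*}
\operatorname{Res}_{w=0} = \frac{(-1)^{\nu-1}}{2^{\nu-1}}\sum_{m=0}^{\infty}\frac{1}{4^{m}\,m!\,(m+\nu-1)!},
\end{equation*}
which, divided by $(2\nu)!$, yields the stated expression for $a_{2\nu}$ after writing $(-1)^{\nu-1} = (-1)^{\nu+1}$. For $\nu = 0$ the constraint is $m = j+1$, so $m$ starts at $1$, and the same bookkeeping gives the claimed $a_{0}$.

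No step here is genuinely hard; the only care needed is with the bookkeeping of signs and index shifts, and with justifying the term-by-term integration (which is immediate from uniform convergence of the $z$-series on the compact contour). Thus the main task is organizational: compute the Laurent expansion, extract the $w^{-1}$ coefficient, separate the $\nu=0$ case from $\nu\geq 1$, and simplify the constants to match the claimed form.
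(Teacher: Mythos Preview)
Your proposal is correct, and in fact cleaner than the paper's own argument, though the underlying idea---extract the residue at $w=0$---is the same. The difference lies in how the exponential is unpacked. You separate the $z$-dependence at the outset by expanding $\exp(z/w)$, reducing the problem to computing the Laurent coefficients of $w^{1-s}\exp\{-1/(2w^2)-w^2/2\}$, which you obtain by multiplying two standard exponential series indexed by $j$ and $m$; the residue condition $2j-2m=s-2$ then drops out immediately. The paper instead expands the full exponential $\exp\{\tfrac12(2z/w-1/w^2-w^2)\}$ as a single power series in the trinomial, rewrites the trinomial as $-w^{-2}(1+(w^4-2zw))$, and then applies two nested binomial expansions, leading to the more involved index constraint $4m-3p=2j-2$ with $0\le p\le m\le j$. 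Your route is shorter and the bookkeeping is lighter; the paper's route has the minor advantage that the evenness of $U$ and the form of each $a_{2\nu}$ emerge from a single unified expansion rather than from a case split on $\nu=0$ versus $\nu\ge 1$, but this is a matter of taste. Either way the computation is routine once the residue formulation is in hand.
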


\begin{proof}
Applying the residue theorem to \eqref{U def} gives
$$U(z) = \text{Res} \left\{w \sum_{j=0}^\infty \frac{1}{j!2^j} \left(\frac{2z}{w} - \frac{1}{w^2} - w^2\right)^j, \; 0 \right\}.$$
For convenience, we rewrite this as follows:
\begin{equation} \label{U res}
U(z) = \text{Res} \left\{\sum_{j=0}^\infty \frac{(1 + (w^4 - 2zw))^j}{(-2)^j j!w^{2j-1}} , \; 0 \right\}.
\end{equation}
Contributions to this residue can only come from terms in $\left(1 + (w^4 - 2zw)\right)^j$ that contain a power $w^{2j-2}$. From the binomial expansions, we obtain
$$(1 + (w^4 - 2zw))^j = \sum_{m=0}^{j} A_m(w^4 - 2zw)^m, \quad \hbox{where} \quad A_m = \frac{j!}{m!(j-m)!},$$
and
$$(w^4 - 2zw)^m = \sum_{p=0}^{m} B_p z^p w^{4m-3p}, \quad \hbox{where} \quad B_p = (-2)^p\frac{m!}{p!(m-p)!}.$$

From the above, we see that the residue contributions in \eqref{U res} can only occur when $p, m, j$ are integers satisfying
\begin{equation} \label{p,m,j}
4m - 3p = 2j - 2  \quad \hbox{and} \quad 0 \leq p \leq m \leq j.
\end{equation}
The contribution $L(p, m, j)$ to the residue from three such integers $p, m, j$ is
\begin{equation} \label{L}
L(p, m, j) = \frac{A_mB_pz^p}{(-2)^jj!} = \frac{z^p}{(-2)^{j-p}(j-m)!p!(m-p)!}.
\end{equation}
From \eqref{p,m,j}, $p$ must be an even integer. It follows that only even powers of $z$ can appear in the Maclaurin series of $U(z)$, which is also known from the fact that $U(z)$ is an even function. 

Suppose that $p = 0$. Then from \eqref{p,m,j},
$$j = 2m + 1, \quad m \geq 0,$$
and from \eqref{L},
$$L(0, m, 2m + 1) = \frac{1}{(-2)^{2m+1}(m+1)!m!}.$$
It follows that
\begin{equation} \label{nu = 0}
a_0 = -\frac{1}{2} \sum_{m=0}^{\infty} \frac{1}{4^m (m+1)!m!}.
\end{equation}

Now suppose that $p = 2\nu > 0$, where $\nu \geq 1$. Then from \eqref{p,m,j},
$$j = 2m + 1 - 3\nu, \quad m \geq 3\nu - 1,$$
and from \eqref{L},
$$L(2\nu, m, 2m + 1 - 3\nu) = \frac{z^{2\nu}}{(-2)^{2m+1-5\nu}(m+1-3\nu)!(2\nu)!(m-2\nu)!}.$$
It follows that
$$a_{2\nu} = (-1)^{\nu+1} \frac{2^{5\nu-1}}{(2\nu)!} \sum_{m=3\nu-1}^{\infty} \frac{1}{4^m (m+1-3\nu)!(m-2\nu)!}, \quad \quad \nu \geq 1.$$
By setting $q = m + 1 - 3\nu$, we obtain
\begin{equation} \label{nu > 0}
a_{2\nu} = \frac{(-1)^{\nu+1}}{2^{\nu-1} (2\nu)!} \sum_{q=0}^{\infty} \frac{1}{4^q q!(q+\nu-1)!}, \quad \quad \nu \geq 1.
\end{equation}
Lemma~\ref{Mac coeff} follows from \eqref{nu = 0} and \eqref{nu > 0}.\end{proof}

\noindent \emph{Proof of Theorem~\ref{U on rays}.} If $\lambda$ is a real number, then from \eqref{ms}, we obtain
$$U(i\lambda) = - \sum_{\nu=0}^{\infty} |a_{2\nu}| \lambda^{2\nu},$$
which proves both part (b) and the first two equalities in \eqref{M(r, U)}. The last equality in \eqref{M(r, U)} follows from $\rho(U) = 2/3$ and \eqref{Valiron}.\hfill$\Box$

\begin{remark}
Although it is known that the order of $U(z)$ is $2/3$, it is not yet known what the type of $U(z)$ is. Below, $\tau(U)$ denotes the type of $U(z)$ with respect to $M(r, U)$. We show how $\rho(U)$ and $\tau(U)$ can be computed by using Lemma~\ref{Mac coeff} with well-known formulas for the Maclaurin coefficients \cite[Ch.~2]{Boas}. 

Using Stirling's formula,   
$$\nu! = (1 + o(1)) \sqrt{2\pi\nu}\left(\frac{\nu}{e}\right)^\nu$$
as $\nu \to \infty$, we obtain for all $\nu$ large enough,
	\begin{eqnarray*}
	|a_{2\nu}|&=&\frac{1}{2^{\nu-1}(2\nu)!} \sum_{m=0}^\infty \frac{1}{4^mm!(m+\nu-1)!}\\ 
	&\leq& \frac{2}{2^{\nu}(2\nu)!(\nu-1)!}\sum_{m=0}^\infty \left(\frac{1}{4}\right)^m\\ 
	&\leq& \frac{3\nu}{2^{\nu}(2\nu)!\nu!} \leq \left(\frac{e}{2\nu}\right)^{3\nu}.	
	\end{eqnarray*}
To get a lower bound for $|a_{2\nu}|$, we again use Stirling's formula, and obtain for all $\nu$ large enough,	
	\begin{equation*}
	|a_{2\nu}| \geq \frac{1}{2^{\nu-1}(2\nu)!(\nu-1)!} = \frac{2\nu}{2^{\nu}(2\nu)!\nu!} \geq 
	\frac{1}{5}\left(\frac{e}{2\nu}\right)^{3\nu}.
	\end{equation*}
Therefore, it can be seen that 
	$$
	\rho(U)=\limsup_{\nu\to\infty}\frac{(2\nu)\log(2\nu)}{-\log |a_{2\nu}|}
	=\frac23,	
	$$
which agrees with the known result. Then, using the notation $\rho = \rho(U) = 2/3$ and the fact that $\lim_{\nu\to\infty}\nu^\frac{1}{\nu}=1$, we obtain
	$$
	\tau(U)=(e\rho)^{-1}\limsup_{\nu\to\infty}(2\nu)|a_{2\nu}|^\frac{\rho}{2\nu}=\frac{3}{2e}\limsup_{\nu\to\infty}(2\nu)|a_{2\nu}|^\frac{1}{3\nu}=\frac32.
	$$	
\end{remark}

\bigskip

As discussed earlier, the contour integral function $H(z)$ in \eqref{H def} also satisfies equation \eqref{U equa} and Theorem~\ref{H and U} below exhibits relationships between the three solutions $U(z)$, $H(z)$, $H(-z)$ of \eqref{U equa}. In order to prove Theorem~\ref{H and U}, we first derive a more convenient form of $H(z)$ that does not involve a branch of the logarithm as in Example~\ref{GSW ex 7}. 

\begin{lemma} \label{H alternate form} 

The function $H(z)$ in \eqref{H def} can be written in the form
\begin{equation} \label{new H form} 
H(z) = \int_{A} w \exp\left\{\frac{z}{w}- \frac{1}{2w^2} - \frac{w^2}{2} \right\} dw,
\end{equation}
where the contour $A$ is defined by $A=A_1+A_2+A_3$ with
\begin{eqnarray*}
A_1&:&  w = r, ~ r ~\text{goes~from}~ +\infty ~ \text{to} ~ 1,\\
A_2&:&  w = e^{i\theta}, ~\theta~ \text{goes~from}~ 0 ~ \text{to}~ \pi,\\
A_3&:&  w = -r, ~ r~ \text{goes~from}~ 1 ~\text{to}~ +\infty.
\end{eqnarray*}

\end{lemma}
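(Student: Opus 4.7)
The plan is to derive \eqref{new H form} in two stages: a change of variables that eliminates the square-root branch, followed by a contour deformation via Cauchy's theorem.

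\textbf{Stage 1 (change of variables).} Substitute $u=\sqrt{2w}$ in \eqref{H def}, using the branch of the square root specified in Example~\ref{GSW ex 7}. Then $w=u^2/2$, $dw=u\,du$, and the exponent transforms as
\begin{equation*}
\frac{z}{\sqrt{2w}} - \frac{1}{4w} - w \;=\; \frac{z}{u} - \frac{1}{2u^2} - \frac{u^2}{2}.
\end{equation*}
Tracking the branch piece by piece: on $K_1$ ($w=re^{i\pi/4}$) we get $u=\sqrt{2r}\,e^{i\pi/8}$; on $K_2$ ($w=e^{i\theta}$, $\theta\in(\pi/4,7\pi/4)$) we get $u=\sqrt{2}\,e^{i\theta/2}$; on $K_3$ ($w=re^{i7\pi/4}$) we get $u=\sqrt{2r}\,e^{i7\pi/8}$. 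Hence $K$ is mapped to a contour $A'=A'_1+A'_2+A'_3$ consisting of the ray $\arg u=\pi/8$ from $\infty$ to $\sqrt{2}\,e^{i\pi/8}$, the arc $|u|=\sqrt{2}$ from $\sqrt{2}\,e^{i\pi/8}$ to $\sqrt{2}\,e^{i7\pi/8}$, and the ray $\arg u=7\pi/8$ from $\sqrt{2}\,e^{i7\pi/8}$ to $\infty$. This gives
\begin{equation*}
H(z) \;=\; \int_{A'} u\exp\left\{\frac{z}{u} - \frac{1}{2u^2} - \frac{u^2}{2}\right\} du.
\end{equation*}

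\textbf{Stage 2 (deformation to $A$).} The integrand $G(u)=u\exp\{z/u-1/(2u^2)-u^2/2\}$ is analytic on $\C\setminus\{0\}$, and both $A'$ and $A$ lie in the closed upper half-plane and avoid $0$. Truncate both at $|u|=R$, then close them by adjoining the two short circular arcs on $|u|=R$ that connect their endpoints (one arc between $\arg u=0$ and $\arg u=\pi/8$ on the right, one between $\arg u=7\pi/8$ and $\arg u=\pi$ on the left). The resulting closed curve bounds a region in the upper half-plane not containing $0$, so Cauchy's theorem yields
\begin{equation*}
\int_{A'_R} G\,du \;-\; \int_{A_R} G\,du \;=\; \int_{\text{closing arcs}} G\,du.
\end{equation*}
On each closing arc, $u=Re^{i\phi}$ with $\phi\in[0,\pi/8]\cup[7\pi/8,\pi]$, so $\cos(2\phi)\geq \cos(\pi/4)=1/\sqrt{2}$ and $\text{Re}(-u^2/2)\leq -R^2/(2\sqrt{2})$. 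Since $\exp(z/u)$ and $\exp(-1/(2u^2))$ remain bounded for large $|u|$, we obtain $|G(u)|\leq C(z)\,R\exp(-R^2/(2\sqrt{2}))$, and multiplying by the arc length $O(R)$ shows the arc integrals are $O(R^2 e^{-cR^2})\to 0$. The same Gaussian factor renders the tails of the rays past $|u|=R$ negligible, so letting $R\to\infty$ gives $\int_{A'}G\,du=\int_{A}G\,du$, which is \eqref{new H form} upon renaming $u$ back to $w$.

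The main obstacle is the careful bookkeeping in Stage 1: one must verify that the single-valued branch of $\sqrt{\zeta}$ on $0<\arg\zeta<2\pi$ behaves consistently across all three pieces of $K$ and produces precisely the contour $A'$ described above. Once $A'$ is correctly identified, the deformation to $A$ in Stages 2--3 is standard and controlled by the fast decay of $\exp(-u^2/2)$ on the arcs at infinity.
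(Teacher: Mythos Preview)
Your proposal is correct and follows essentially the same route as the paper: the substitution $u=\sqrt{2w}$ to remove the branch, followed by a Cauchy-theorem deformation justified by the Gaussian decay of $\exp(-u^2/2)$ on circular arcs near the real axis. The only difference is cosmetic: the paper inserts an intermediate contour $L$ (same rays as your $A'$ but with the connecting arc shrunk to $|u|=1$) and then deforms $L$ to $A$ using two separate closed boxes $D(R)$ and $S(R)$, whereas you deform $A'$ to $A$ in one step; the underlying estimate $\cos 2\phi\ge 1/\sqrt{2}$ on the closing arcs is identical.
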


\begin{proof} By making the change of variable $v = \sqrt{2w}$ in \eqref{H def}, we obtain
\begin{equation*} 
H(z) = \int_{J} v \exp\left\{\frac{z}{v}- \frac{1}{2v^2} - \frac{v^2}{2} \right\} dv,
\end{equation*}
where $J$ is the contour defined by $J = J_1 + J_2 + J_3$ with
\begin{eqnarray*}
J_1:  &v& = re^{\pi i/8}, ~ r ~\text{goes~from}~ +\infty ~ \text{to} ~ \sqrt{2},\\
J_2:  &v& = \sqrt{2}e^{i\theta}, ~\theta~ \text{goes~from}~ \pi/8 ~ \text{to}~ 7\pi/8,\\
J_3:  &v& = re^{7 \pi i/8}, ~ r~ \text{goes~from}~ \sqrt{2} ~\text{to}~ +\infty.
\end{eqnarray*}

From Cauchy's theorem, it can be seen that the curve $J$ can be replaced by the curve $L$ below as follows:
\begin{equation} \label{L curve}
H(z) = \int_{L} v \exp\left\{\frac{z}{v}- \frac{1}{2v^2} - \frac{v^2}{2} \right\} dv,
\end{equation}
where $L$ is the contour defined by $L = L_1 + L_2 + L_3$ with
\begin{eqnarray*}
L_1:  &v& = re^{\pi i/8}, ~ r ~\text{goes~from}~ +\infty ~ \text{to} ~ 1,\\
L_2:  &v& = e^{i\theta}, ~\theta~ \text{goes~from}~ \pi/8 ~ \text{to}~ 7\pi/8,\\
L_3:  &v& = re^{7 \pi i/8}, ~ r~ \text{goes~from}~ 1 ~\text{to}~ +\infty.
\end{eqnarray*} 

    \begin{figure}[H]\label{contour L}
    \begin{center}
    \begin{tikzpicture}[scale=0.55]
    \draw[->](-4,0)--(4,0)node[left,below]{$x$};
    \draw[->](0,-3)--(0,4)node[right]{$y$};
    \draw[thick][domain=0.924:4] plot(\x,{0.414*\x})node[right,font=\tiny]{$L_1: \arg v=\pi/8$};
    \draw[thick][domain=-0.85:-4] plot(\x,{-0.414*\x})node[left,font=\tiny]{$L_3: \arg v=7\pi/8$};
    \draw[thick][->](2.5,2.5*0.414)--(2.2,2.2*0.414);
    \draw[thick][->](-2,2*0.414)--(-2.4,0.414*2.4);
     \draw[thick][->](0.924,0.924*0.414)arc(30:90:1);
     \draw[thick](0.924,0.924*0.414)arc(30:153:1)node[below=1pt,right=0.5pt,font=\tiny]{$L_2: |v|=1$};
     \draw[-,dashed][->](1.5,1.5*0.414)arc(30:90:1.6);
     \draw[-,dashed](1.5,1.5*0.414)arc(30:153:1.6)node[above=20pt,right=3.5pt,font=\tiny]{$J_2: |v|=\sqrt{2}$};
    \end{tikzpicture}
    \end{center}
    \begin{quote}
    \caption{Contour $L$}
    \end{quote}
    \end{figure}
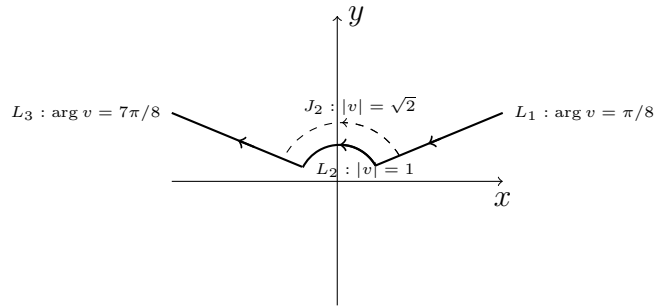 

We now show that the curve $L$ in \eqref{L curve} can be replaced by the curve $A$ in \eqref{new H form}, which will prove Lemma~\ref{H alternate form}. First we address the right half-plane. For any fixed $R > 1$, it follows from Cauchy's theorem that
\begin{equation} \label{D(R)}
\int_{D(R)} v \exp\left\{\frac{z}{v}- \frac{1}{2v^2} - \frac{v^2}{2} \right\} dv = 0,
\end{equation}
where $D(R)$ is the closed contour defined by $D(R) = D_1 + D_2 + D_3 + D_4$ with
\begin{eqnarray*}
D_1:  &v& = re^{\pi i/8}, ~ r ~\text{goes~from}~ R ~ \text{to} ~ 1,\\
D_2:  &v& = e^{i\theta}, ~\theta~ \text{goes~from}~ \pi/8 ~ \text{to}~ 0,\\
D_3:  &v& = r, ~ r ~ \text{goes~from}~ 1 ~\text{to}~ R,\\
D_4:  &v& = Re^{i\theta}, ~ \theta ~ \text{goes~from}~ 0 ~ \text{to}~ \pi/8.
\end{eqnarray*}
We now show that the contribution to the integral in \eqref{D(R)} from the arc $D_4$ goes to zero as $R \to \infty$. We have
$$\left| \int_{D_4} v \exp\left\{\frac{z}{v}- \frac{1}{2v^2} - \frac{v^2}{2} \right\} dv \right| \leq R^2 \int_{0}^{\pi/8} \exp\left\{\frac{|z|}{R}+\frac{1}{2R^2}-\frac{R^2}{2}\cos 2\theta \right\} d\theta.$$
Since $\cos 2\theta \geq \cos \pi/4 = \sqrt{2}/2$ for $0 \leq \theta \leq \pi/8$, we obtain
$$\left| \int_{D_4} v \exp\left\{\frac{z}{v}- \frac{1}{2v^2} - \frac{v^2}{2} \right\} dv \right| \leq \frac{\pi R^2}{8} \exp\left\{\frac{|z|}{R}+\frac{1}{2R^2}-\frac{\sqrt{2}R^2}{4} \right\} .$$
Therefore,
\begin{equation}\label{D4} 
\int_{D_4} v \exp\left\{\frac{z}{v}- \frac{1}{2v^2} - \frac{v^2}{2} \right\} dv \to 0 \quad \hbox{as} \quad R \to \infty.
\end{equation}
By letting $R \to \infty$ in \eqref{D(R)} and using \eqref{D4}, we deduce that
\begin{equation} \label{right part}
\int_{L_1} v \exp\left\{\frac{z}{v}- \frac{1}{2v^2} - \frac{v^2}{2} \right\} dv = \int_{E_2 + E_3} v \exp\left\{\frac{z}{v}- \frac{1}{2v^2} - \frac{v^2}{2} \right\} dv,
\end{equation}
where $L_1$ (above), $E_2$, $E_3$ are defined by
\begin{eqnarray*}
L_1:  &v& = re^{\pi i/8}, ~ r ~\text{goes~from}~ +\infty ~ \text{to} ~ 1,\\
E_2:  &v& = r, ~ r ~ \text{goes~from}~ +\infty ~\text{to}~ 1,\\
E_3:  &v& = e^{i\theta}, ~\theta~ \text{goes~from}~ 0 ~ \text{to}~ \pi/8.
\end{eqnarray*}

We now use the same reasoning in the left half-plane. From Cauchy's theorem, for any fixed $R > 1$,
\begin{equation} \label{S(R)}
\int_{S(R)} v \exp\left\{\frac{z}{v}- \frac{1}{2v^2} - \frac{v^2}{2} \right\} dv = 0,
\end{equation}
where $S(R)$ is the closed contour defined by $S(R) = S_1 + S_2 + S_3 + S_4$ with
\begin{eqnarray*}
S_1:  &v& = re^{7\pi/8}, ~ r ~\text{goes~from}~ 1 ~ \text{to} ~ R,\\
S_2:  &v& = Re^{i\theta}, ~\theta~ \text{goes~from}~ 7\pi/8 ~ \text{to}~ \pi,\\
S_3:  &v& = -r, ~ r ~ \text{goes~from}~ R ~\text{to}~ 1,\\
S_4:  &v& = e^{i\theta}, ~ \theta ~ \text{goes~from}~ \pi ~ \text{to}~ 7\pi/8.
\end{eqnarray*}

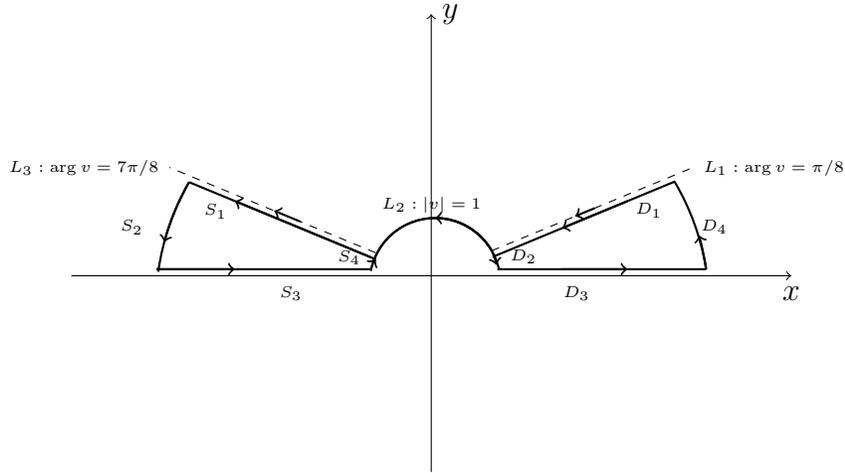
\begin{figure}[H]\label{contours D(R) and S(R)}
    \begin{center}
    \begin{tikzpicture}[scale=0.87]
    \draw[->](-5.5,0)--(5.5,0)node[left,below]{$x$};
    \draw[->](0,-3)--(0,4)node[right]{$y$};
    \draw[-,dashed][domain=0.924:4] plot(\x,{0.414*\x})node[right,font=\tiny]{$L_1: \arg v=\pi/8$};
    \draw[-,dashed][domain=-0.85:-4] plot(\x,{-0.414*\x})node[left,font=\tiny]{$L_3: \arg v=7\pi/8$};
    \draw[thick][->](2.5,2.5*0.414)--(2.2,2.2*0.414);
    \draw[thick][->](-2,2*0.414)--(-2.4,0.414*2.4);
     \draw[thick][->](0.924,0.924*0.414)arc(30:90:1);
     \draw[thick](0.924,0.924*0.414)arc(30:169:1)node[above=25pt,right,font=\tiny]{$L_2: |v|=1$};
    \draw[thick](-4.2,0.1)--(-0.93,0.1)node[left=30pt,below=2pt,font=\tiny]{$S_3$};
    \draw[thick][->](-4.2,0.1)--(-3,0.1);    \draw[thick](0.924,0.924*0.414)arc(30:165:1)node[above=3.5pt,left,font=\tiny]{$S_4$};
    \draw[thick][<-](-0.85,0.85*0.414-0.1)arc(145:146:1);
     \draw[thick](-0.85,0.85*0.414-0.1)--(-3.7,3.7*0.414-0.1)node[right=10pt,below=4pt,font=\tiny]{$S_1$};
     \draw[thick][->](-0.924,0.924*0.414-0.1)--(-3,3*0.414-0.1);
     \draw[thick](-3.7,3.7*0.414-0.1)arc(180-30:180-8:3.8)node[left=10pt,above=10pt,font=\tiny]{$S_2$};
      \draw[thick][->](-3.7,3.7*0.414-0.1)arc(180-30:180-15:3.8);
    \draw[thick](4.2,0.1)--(1.01,0.1)node[right=30pt,below=2pt,font=\tiny]{$D_3$};
    \draw[thick][->](2,0.1)--(3,0.1);
     \draw[thick](0.924,0.924*0.414)arc(30:12:1)node[above=5pt,right,font=\tiny]{$D_2$};
     \draw[thick][->](0.924,0.924*0.414)arc(25:12:1);
     \draw[thick](0.938,0.938*0.414-0.1)--(3.73,3.73*0.414-0.1)node[left=10pt,below=4pt,font=\tiny]{$D_1$};
     \draw[thick][<-](2,2*0.414-0.1)--(3,3*0.414-0.1) ;
     \draw[thick](4.2,0.1)arc(9:30.5:3.8)node[right=15pt,below=10pt,font=\tiny]{$D_4$};
     \draw[thick][->](4.2,0.1)arc(8:16:3.8);
    \end{tikzpicture}
    \end{center}
    \begin{quote}
    \caption{Contours $D(R)$ and $S(R)$}
    \end{quote}
    \end{figure} 
From similar reasoning to that used to get \eqref{D4}, we will obtain
$$ 
\int_{S_2} v \exp\left\{\frac{z}{v}- \frac{1}{2v^2} - \frac{v^2}{2} \right\} dv \to 0 \quad \hbox{as} \quad R \to \infty.
$$
Thus, by letting $R \to \infty$ in \eqref{S(R)}, we deduce that
\begin{equation} \label{left part}
\int_{L_3} v \exp\left\{\frac{z}{v}- \frac{1}{2v^2} - \frac{v^2}{2} \right\} dv = \int_{T_2 + T_3} v \exp\left\{\frac{z}{v}- \frac{1}{2v^2} - \frac{v^2}{2} \right\} dv,
\end{equation}
where $L_3$ (above), $T_2$, $T_3$ are defined by
\begin{eqnarray*}
L_3:  &v& = re^{7\pi i/8}, ~ r ~\text{goes~from}~ 1 ~ \text{to} ~ +\infty,\\
T_2:  &v& = e^{i\theta}, ~\theta~ \text{goes~from}~ 7\pi/8 ~ \text{to}~ \pi,\\
T_3:  &v& = -r, ~ r ~ \text{goes~from}~ 1 ~\text{to}~ +\infty.
\end{eqnarray*}
By combining \eqref{L curve}, \eqref{right part} and \eqref{left part}, we obtain \eqref{new H form}.\end{proof}

The following result exhibits relationships between $U(z)$ in \eqref{U def}, $H(z)$ in \eqref{new H form}, and $H(-z)$. Since $U(z)$ is an even function, we have $U(z) \equiv U(-z)$, which is confirmed again by \eqref{H & U} below.

\begin{theorem} \label{H and U}

The three contour integral functions $U(z)$, $H(z)$, $H(-z)$ are pairwise linearly independent solutions of \eqref{U equa} which satisfy 
\begin{equation} \label{H and U order}
\rho(U(z)) = \rho(H(z)) = \rho(H(-z)) = 2/3.
\end{equation}
Moreover, we have
\begin{equation} \label{H & U}
2 \pi i U(z) \equiv H(z) + H(-z).
\end{equation}

\end{theorem}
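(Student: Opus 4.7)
The plan is to first establish the identity $2\pi i\,U(z) \equiv H(z) + H(-z)$ by a direct contour manipulation, and then to derive pairwise linear independence as a corollary. Starting from the form of $H$ given by Lemma~\ref{H alternate form}, I would substitute $w \mapsto -v$ in $H(-z)$: the integrand $v\exp\{z/v - 1/(2v^2) - v^2/2\}\,dv$ is invariant under this substitution, while the contour $A$ is carried to the contour $-A$ which traverses the negative real axis from $-\infty$ to $-1$, the lower unit semicircle from $-1$ to $1$, and the positive real axis from $1$ to $\infty$. Summing $H(z)$ (over $A$) and $H(-z)$ (over $-A$), the two rays on each real half-axis cancel, and the upper semicircle $A_2$ combined with the lower semicircle $-A_2$ yields one counterclockwise traversal of $|w|=1$. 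Hence $H(z)+H(-z) = \oint_{|w|=1} w\exp\{z/w - 1/(2w^2) - w^2/2\}\,dw = 2\pi i\,U(z)$. The fact that $U$, $H(z)$, $H(-z)$ are all solutions of \eqref{U equa} is immediate from Theorem~\ref{psi exist} (with $n=4$, $k=3$), Example~\ref{GSW ex 7}, and the obvious invariance of \eqref{U equa} under $z\mapsto -z$; the order equalities $\rho(U)=\rho(H)=\rho(H(-z))=2/3$ follow from Theorem~\ref{psi order} and Example~\ref{GSW ex 7}.

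For pairwise independence, I would exploit the identity together with the fact that $U$ is even (Theorem~\ref{psi even unbounded} or Theorem~\ref{U on rays}). If $H(-z) = cH(z)$, iterating gives $c^2 = 1$; the case $c=-1$ forces $2\pi i\,U\equiv 0$, contradicting Theorem~\ref{psi exist}, so $c=1$ and $H$ is even. If $U(z) = aH(z)$ with $a\neq 0$, the evenness of $U$ forces $H(z) = H(-z)$, so again $H$ is even; the pair $\{U,H(-z)\}$ is handled the same way. Consequently, the entire pairwise independence claim reduces to the single assertion that \emph{$H$ is not an even function}.

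To verify this, I compute $H'(0)$ from \eqref{new H form}: differentiation under the integral sign yields $H'(0) = \int_A \exp\{-1/(2w^2) - w^2/2\}\,dw$. On $A_1$ ($w=r$, $r$ from $\infty$ to $1$) and on $A_3$ ($w=-r$, $r$ from $1$ to $\infty$) the contributions are each equal to $-\int_1^\infty \exp\{-1/(2r^2) - r^2/2\}\,dr$. On $A_2$ the contribution is $i\int_0^\pi e^{i\theta} e^{-\cos 2\theta}\,d\theta$; splitting $e^{i\theta} = \cos\theta + i\sin\theta$, the cosine integral vanishes by the symmetry $\theta\mapsto \pi - \theta$, while the sine integral is a strictly positive constant $J$, so $A_2$ contributes $-J$. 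Summing gives $H'(0) = -2\int_1^\infty \exp\{-1/(2r^2) - r^2/2\}\,dr - J < 0$, hence $H'(0)\neq 0$ and $H$ is not even. The main obstacle in the plan is the careful bookkeeping of contour orientations in the identity step; once that is settled, the reduction of independence to a parity question and the short computation of $H'(0)$ are routine.
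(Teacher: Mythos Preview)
Your proof is correct and largely parallels the paper's argument: the identity \eqref{H & U} is obtained exactly as you describe (the paper writes the image contour as $P$ rather than $-A$, but the substitution and the resulting cancellation of the ray pieces are identical), and the computation showing $H'(0)<0$ is line-for-line the same.

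The one genuine difference is in the independence step. The paper computes the Wronskian $W(U,H)(0)=U(0)H'(0)$, using $U'(0)=0$; it then needs \emph{both} $H'(0)\neq 0$ and $U(0)\neq 0$, the latter being supplied by a numerical approximation $U(0)\approx -0.5652$ (though Lemma~\ref{Mac coeff} would have given it analytically). Your reduction---observing that each of the three possible dependencies forces $H$ to be even, so that $H'(0)\neq 0$ alone suffices---is a tidy improvement: it eliminates the need to evaluate $U(0)$ at all and keeps the argument entirely free of numerics. The trade-off is that the paper's Wronskian computation is more mechanical, while your case analysis requires the small extra observation that the iteration $H(-z)=cH(z)\Rightarrow c^2=1$ and the evenness of $U$ funnel everything to a parity question. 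Both routes then finish with the same integral.
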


\begin{proof} Since $H(z)$ is a known solution of \eqref{U equa} from Example~\ref{GSW ex 7}, it can be verified that $H(-z)$ is also a solution of \eqref{U equa}. Since $U(z)$ in \eqref{U def} is a solution of \eqref{U equa}, all three functions $U(z)$, $H(z)$, $H(-z)$ are solutions of \eqref{U equa}. We have $\rho(U(z)) = 2/3$ from Theorem~\ref{psi order}. Since $\rho(H(z)) = 2/3$ from Example~\ref{GSW ex 7}, it follows that $\rho(H(-z)) = 2/3$. Thus, \eqref{H and U order} holds. 

Next, we have
$$H(-z) = \int_{A} w \exp\left\{-\frac{z}{w}- \frac{1}{2w^2} - \frac{w^2}{2} \right\} dw,$$
where $A$ is the contour in \eqref{new H form}. From the substitution $v = -w$, we obtain
\begin{equation} \label{H(-z) form} 
H(-z) = \int_{P} v \exp\left\{\frac{z}{v}- \frac{1}{2v^2} - \frac{v^2}{2} \right\} dv,
\end{equation}
where the contour $P$ is defined by $P=P_1+P_2+P_3$ with
\begin{eqnarray*}
P_1&:&  v = -r, ~ r ~\text{goes~from}~ +\infty ~ \text{to} ~ 1,\\
P_2&:&  v = e^{i\theta}, ~\theta~ \text{goes~from}~ \pi ~ \text{to}~ 2\pi,\\
P_3&:&  v = r, ~ r~ \text{goes~from}~ 1 ~\text{to}~ +\infty.
\end{eqnarray*}
The identity \eqref{H & U} can be deduced from \eqref{U def}, \eqref{new H form}, and \eqref{H(-z) form}.

It remains to prove that the three functions $U(z)$, $H(z)$, $H(-z)$ are pairwise linearly independent. First we show that $U(z)$ and $H(z)$ are linearly independent. To this end, we will compute the Wronskian $W(U(z), H(z))(0)$. Since $U(z)$ is an even function, it follows that $U'(0) = 0$. Hence, 
\begin{equation} \label{W}
W(U(z), H(z))(0) = U(0)H'(0).
\end{equation}

We compute $U(0)$ and $H'(0)$. From \eqref{U def},
\begin{equation*}
\begin{split}
U(0)&=\frac{1}{2\pi }\int_0^{2\pi}e^{2i\theta}\exp\left\{-\frac{1}{2}e^{-2i\theta}-\frac{1}{2}e^{2i\theta}\right\} \; d\theta\\
    &=\frac{1}{2\pi}\int_0^{2\pi}\cos 2\theta \; e^{-\cos 2\theta} \; d\theta + \frac{i}{2\pi}\int_0^{2\pi}\sin 2\theta \; e^{-\cos 2\theta} \; d\theta.
\end{split}
\end{equation*}
The second integral equals zero because $U(z)$ is real on the real axis from Theorem~\ref{psi order}. By using a computer calculation on the first integral, we get the approximation
\begin{equation} \label{U(0)}
U(0) \approxeq -0.5652.
\end{equation}
From \eqref{new H form}, we have
\begin{equation*}
\begin{split}
H'(z)=& \int_{A}\exp\left\{\frac{z}{w}- \frac{1}{2w^2} - \frac{w^2}{2} \right\} dw\\
     =&\int_\infty^1\exp\left\{\frac{z}{r}-\frac{1}{2r^2}-\frac{1}{2}r^{2}\right\}dr\\
      &+i\int_0^\pi e^{i\theta}\exp\left\{ze^{-i\theta}- \frac{1}{2}e^{-2i\theta} - \frac{1}{2}e^{2i\theta} \right\} d\theta\\
      &-\int^\infty_1\exp\left\{-\frac{z}{r}-\frac{1}{2r^2}-\frac{1}{2}r^{2}\right\}dr.
\end{split}
\end{equation*}
This gives
\begin{equation*}
H'(0) = i\int_0^\pi e^{i\theta}\exp\left\{- \cos 2\theta \right\} d\theta
    -2\int^\infty_1\exp\left\{-\frac{1}{2r^2}-\frac{1}{2}r^{2}\right\}dr,
\end{equation*}
which reduces to
$$H'(0) = -\int_0^\pi \sin \theta \; e^{-\cos 2\theta} \, d\theta
  -2\int^\infty_1\exp\left\{-\frac{1}{2r^2}-\frac{1}{2}r^{2}\right\}dr,$$
because
$$\int_0^\pi \cos \theta \; e^{-\cos 2\theta} \, d\theta = -\int_{-\pi/2}^{\pi/2} \sin\theta \; e^{\cos2\theta} \, d\theta=0.$$
Hence, $H'(0) \not= 0$, which when combined with \eqref{W} and \eqref{U(0)}, gives
$$W(U(z), H(z))(0) \not= 0.$$
It follows that $U(z)$ and $H(z)$ are linearly independent. Thus, since \eqref{H & U} holds, it can be deduced that the three functions $U(z)$, $H(z)$, $H(-z)$ are pairwise linearly independent. This completes the proof of Theorem~\ref{H and U}.\end{proof}

\footnotesize

\bigskip
\noindent
\emph{Gary~G.~Gundersen}\\
\textsc{University of New Orleans, Department of Mathematics, New Orleans, LA 70148, USA}\\
\texttt{email:ggunders@uno.edu}
\medskip

\noindent
\emph{Janne~M.~Heittokangas}\\
\textsc{University of Eastern Finland, Department of Physics and Mathematics,
P.O.~Box 111, 80101 Joensuu, Finland}\\
\textsc{Taiyuan University of Technology,
Department of Mathematics,
Yingze West Street, No. 79, Taiyuan 030024, China}\\
\texttt{email:janne.heittokangas@uef.fi}
\medskip

\noindent
\emph{Zhi-Tao~Wen}\\
\textsc{Shantou University,
Department of Mathematics,
Daxue Road, No. 243, Shantou 515063, China}\\
\textsc{Taiyuan University of Technology,
Department of Mathematics,
Yingze West Street, No. 79, Taiyuan 030024, China}\\
\texttt{e-mail:zhtwen@stu.edu.cn}

\end{document}